\newcommand{\ad}{\mathrm{ad}}
\newcommand{\Ad}{\mathrm{Ad}}
\newcommand{\autCC}{\mathrm{Aut}(\mathcal{C})}
\newcommand{\A}{\mathcal{A}}
\newcommand{\CC}{\mathcal{C}}%
\newcommand{\cinf}{C^{\infty}(\R^D)}%
\newcommand{\derCC}{\mathrm{Der}(\mathcal{C})}
\newcommand{\id}{\mathrm{id}}
\newcommand{\cI}{\mathcal{I}}
\newcommand{\cJ}{\mathcal{J}}
\newcommand{\tg}{\tilde {\mathfrak{g}}}
\newcommand{\g}{\mathfrak{g}}
\newcommand{\G}{\mathcal{G}}
\renewcommand{\H}{\mathcal{H}}
\renewcommand{\L}{\mathcal{L}}
\newcommand{\R}{\mathbb{R}}
\newcommand{\RA}{\R\!\langle \A \rangle}
\newcommand{\W}{\mathcal{W}}
\newcommand{\Z}{\mathbb{Z}}
\newcommand{\one}{{\, 1\!\!\! 1\,}}
\begin{document}

\title*{{Hopf algebra techniques to handle  dynamical systems and  numerical integrators}}
\author{A. Murua and J.M. Sanz-Serna}
\institute{A. Murua \at Konputazio Zientziak eta A.\ A.\  Saila, Informatika
 Fakultatea, UPV/EHU, E--20018 Donostia--San Sebasti\'{a}n,  Spain. \email{Ander.Murua@ehu.es}
\and J.M. Sanz-Serna \at Departamento de Matem\'aticas, Universidad Carlos III de Madrid, E--28911 Legan\'es (Madrid),  Spain.
 \email{jmsanzserna@gmail.com}}
%
%
\maketitle
\abstract{In a series of  papers the present authors and their coworkers
 have developed a family of algebraic techniques to solve a number of problems
  in the theory of discrete or continuous dynamical systems and to analyze numerical
  integrators. Given a specific problem, those techniques construct an abstract,
  {\em universal} version of it which is solved algebraically; then, the results are
   transferred to the original problem with the help of a suitable morphism. In earlier contributions,
    the abstract problem is formulated either in the
dual of the shuffle Hopf algebra  or in the dual of the Connes-Kreimer Hopf algebra.
In the present contribution we extend these techniques to more general Hopf algebras,
which in some cases lead to more efficient computations.}

\abstract*{In a series of  papers the present authors and their coworkers  have developed a family of algebraic techniques to solve a number of problems in the theory of discrete or continuous dynamical systems and to analyze numerical integrators. Given a specific problem, those techniques construct an abstract, {\em universal} version of it which is solved algebraically; then, the results are transferred to the original problem with the help of a suitable morphism. In earlier contributions, the abstract problem is formulated either in the
dual of the shuffle Hopf algebra  or in the dual of the Connes-Kreimer Hopf algebra. In the present contribution we extend these techniques to more general Hopf algebras, which in some cases lead to more efficient computations.}

\section{Introduction}
A series of  papers \cite{part1}, \cite{part2}, \cite{orlando}, \cite{part3}, \cite{china}, \cite{juanluis},
\cite{guirao}, \cite{alfonso}, \cite{kurusch}, \cite{words}, \cite{charter}  have developed a family of
algebraic techniques to solve a number of problems in the theory of discrete or continuous dynamical systems
and to analyze numerical integrators. Given a specific problem, those techniques construct an abstract, {\em
universal} version of it which is solved algebraically; then, the result is transferred to the original
problem with the help of a suitable morphism {$\Psi$}. The abstract problem is formulated either in the dual
of the shuffle Hopf algebra of words \cite{words} or in the dual of the Connes-Kreimer Hopf algebra of rooted
trees \cite{part2}.  {Operations with elements of the  relevant dual  are mapped by  $\Psi$ into operations
with formal series of differential operators.}
 For the shuffle Hopf algebra, the
solution of the original problem appears expressed as a so-called  {\em word series} \cite{words}. {In the
Connes-Kreimer case, the resulting series for the original problem are  \emph{B-series;} the Butcher group
(the group of characters of the Connes-Kreimer Hopf algebra) and B-series first appeared in the context of
numerical analysis of differential equations (see \cite{china} for a survey) decades before the Connes-Kreimer
Hopf algebra was introduced in the context of renormalization in quantum field theory. Duals of Hopf algebras
are useful in this setting because they provide rules for composing formal series.}

In the present contribution we extend these techniques to more general Hopf algebras, which in some cases lead
to more efficient computations (cf.\ \cite{fm}).

Problems that may be treated in this form include averaging of periodically or quasiperiodically forced differential systems \cite{part1}, \cite{part2}, \cite{orlando}, \cite{part3}, \cite{guirao}, \cite{kurusch}, construction of formal invariants of motion \cite{part2}, \cite{orlando}, \cite{part3}, \cite{juanluis} computation of normal forms \cite{juanluis}, \cite{kurusch}, calculations on central manifolds~\cite{charter}, and error analysis of splitting integrators for deterministic \cite{words} or stochastic \cite{alfonso} systems of differential equations. Of course it would be impossible to take up here each of those problems; the examples in this paper only refer to the computation of high-order averaged systems for periodically forced differential equations and to the analysis of the Strang splitting formula when applied to perturbations of integrable systems

The techniques studied here go back to a number of earlier developments, in particular, mention has to be made of Ecalle's mould calculus \cite{ecalle}, \cite{ecalle2} (see \cite{sauzin}, \cite{fm}, \cite{sp1}, \cite{sp2}, \cite{sp3} for more recent contributions), and of the algebraic theory of  integrators \cite{butcher}, \cite{anderfocm}, \cite{china},  \cite{MKW2008},\cite{MKL2011}.

An outline of this paper now follows. Section~\ref{sec:afds} reviews some well-known ideas on the reformulation of differential systems in Euclidean spaces as operator differential equations. Section~\ref{sec:an example} illustrates the algebraic approach in the series of papers mentioned at the beginning of this introduction. It does so by considering a concrete  averaging problem in \(\R^5\) and explicitly finding a high-order averaged system by first working abstractly in the group of characters of the shuffle Hopf algebra.  The complexity of the computations grows very quickly with the order of the averaged system sought and this motivates the material in Section~\ref{sec:general hopf}, where we show how to work with other Hopf algebras to increase the efficiency of the algorithms. The vector fields (derivations)  appearing in the given problem \(\mathcal{P}\) in Euclidean space are written as images by a Lie algebra homomorphism \(\Psi\) mapping a suitable graded Lie algebra \(\tg\) into the Lie algebra of derivations. From \(\tg\) we construct a graded, commutative Hopf algebra \(\H\) in such a way that an \lq abstact\rq\ version of \(\mathcal{P}\) may be formally solved in the group of characters \(\G\) of \(\H\); finally the formal solution in \(\G\) is translated into a formal solution of \(\mathcal{P}\). For the concrete averaging problem in
\(\R^5\),  we present a succession of alternative Hopf algebras that make it possible to compute approximations of increasingly higher order. The final section presents material where the ideas in the paper are suitably modified to cater for problems written in perturbation form, generalizing the notion of {\em extended word series} introduced in \cite{words} and used in \cite{juanluis}, \cite{kurusch}.

Due to space constraints,  we have not attempted to present the results in the most general conceivable
scenario. For instance, it is possible to work with differential systems defined on differentiable manifolds
rather than in  Euclidean spaces and scalars could be complex rather real.

\section{Algebraic formulation of differential systems}
\label{sec:afds}

It is well known that differential systems in $\R^D$ may be interpreted as
 differential equations that describe the evolution of suitably chosen linear  operators. This section reviews that interpretation, which plays a key role in later developments. We use the following notation. The vector space $\CC=\cinf$ consists of all smooth $\R$-valued functions on $\R^D$. Functions \(\chi\in\CC\) are sometimes called observables. With respect to the pointwise multiplication of observables, the space \(\CC\) is an associative and commutative algebra. The symbol \(\mathrm{End}(\CC)\) denotes the vector space of all linear operators \( X:\CC\rightarrow\CC\). When operators are multiplied by composition, \( (X_1 X_2)(\chi) = X_1(X_2(\chi))\), \(\mathrm{End}(\CC)\) is an associative algebra with a unit: the identity operator \( I:\chi\mapsto \chi\).

Consider the initial value problem in \( \R^D\)
\begin{equation}
\label{eq:odex}
\frac{d}{dt} x(t)=f(x(t),t), \quad x(0)=x_0,
\end{equation}
with  \( f\) smooth. For each frozen value of \( t\), the vector field \( f(\cdot,t)\) defines a first-order
linear differential operator \( F(t) \in \mathrm{End}(\CC)\) that associates with each observable \( \chi \)
the observable \( F(t) \chi\in\CC\) such that{
\[F(t)\chi(x) =  f(x,t)^T \cdot \nabla \chi(x) = \sum_{j=1}^{D}
f_j(x,t) \frac{\partial}{\partial x_j} \chi(x) \]
 for each \(x =(x_1,\ldots,x_D) \in\R^D\).}
 Actually, \( F(t)\) is  a derivation of the algebra \(\CC\), i.e.
\[ F(t)(\chi_1\chi_2) = (F(t)\chi_1)\, \chi_2+\chi_1\, (F(t)\chi_2).\]
The space $\derCC\subset \mathrm{End}(\CC)$ consisting of all derivations in \( \CC\) is a Lie algebra with respect to the commutator \( [F_1,F_2] = F_1F_2-F_2F_1\).

Assuming for the time being that for each  $x_0 \in \R^d$ the solution $x(t)$ of (\ref{eq:odex})  exists
for all $t \in \R$, we may define a one-parameter family \( X(t)\), \(t\in\R\), of elements of \(\mathrm{End}(\CC)\) as follows: for each observable $\chi\in\CC$ and  each $t \in \R$, $ X(t) \chi \in \CC$ is such that $ X(t)\chi ( x(0))=\chi(x(t) ) $ for each \( x(0) \in \R^D\). Clearly each \(X(t)\) is an automorphism of the algebra \(\CC\), i.e.
\begin{equation}
\label{eq:autprop}
X(t) (\chi_1\chi_2) = X(t) (\chi_1)\, X(t) (\chi_2),
\end{equation}
for any \(\chi_1, \chi_2\in\CC\). The set \(\autCC\) of all algebra automorphisms is a group for the composition of operators.

Since given $\chi \in \CC$,
\[
\frac{d}{dt} \chi(x(t)) = \chi'(x(t))\cdot f(x(t),t),
\]
we have that
\begin{equation}
\label{eq:odeX}
\frac{d}{dt} X(t) = X(t) F(t), \quad X(0)=I,
\end{equation}
or equivalently
\begin{equation}
\label{eq:odeXint}
X(t) = I + \int_{0}^{t} X(s) F(s) \, ds.
\end{equation}
In this way the solvability of (\ref{eq:odex}) implies the solvability of the operator initial value problem (\ref{eq:odeX}). When comparing (\ref{eq:odeX}) with  (\ref{eq:odex}) we note that (\ref{eq:odeX})
is  linear in \( X\) even when (\ref{eq:odex}) is not linear in \(x \); the multiplication  of operators \( X(t)F(t)\) in (\ref{eq:odeX}) corresponds to the composition of the maps \( t\mapsto x(t)\), \( (x,t)\mapsto f(x,t)\) that appears in
(\ref{eq:odex}).

Conversely assume that  $F:\R \to \derCC$ is such that there exists a one-parameter family \(X(t)\) of elements of
\(\autCC\) satisfying (\ref{eq:odeX}). We then define, for each \( t\), a vector field \( f(\cdot,t)\) in \(R^D\) by setting
\( f^i(x,t) = F(t) \chi^i(x)\), \( i= 1,\dots,D\), where \( \chi^i\) is the \(i\)-th coordinate function \(\chi^i(x) = x^i\) (superscripts denote components of a vector), and consider the corresponding problem (\ref{eq:odex}). Then, it is easily checked that (\ref{eq:odex}) has, for each \( x_0\), a solution \(x(t)\) defined for all real \(t\) and the \(i\)-component of \(x(t)\) may be found as
\(x^i(t) = (X(t)\chi^i)(x_0)\). We emphasize that for this construction to work it is essential that the operators
\(X(t)\) satisfy \eqref{eq:autprop}, i.e.\ they are automorphisms of \(\CC\).

We will present below algebraic frameworks where the initial
value problem (\ref{eq:odeX}) is interpreted in a broader sense, admitting solution curves $X(t)$ that
evolve in  groups of {\em formal automorphisms} rather than in \(\autCC\). Roughly speaking such formal
 automorphisms will be  {{\em formal}} series of linear maps that preserve multiplication
 of observables as in (\ref{eq:autprop}).
Even in the case where $X(t)$ does not correspond to an actual curve in $\autCC$,
 such  formal solution curves $X(t)$ may be used to derive rigorous results on the solution $x(t)$ of (\ref{eq:odex}).

\section{An example}
\label{sec:an example}

In this section we illustrate the use of Hopf algebra techniques by means of an example: the construction of high-order averaged systems for a periodic differential system in \(\R^5\).

\subsection{A highly-oscillatory differential system}

The following system of differential equations arises in the study of vibrational resonance in an energy harvesting device \cite{sanjuan}:
\begin{align*}
\frac{dx}{dt} &=  y, \\
\frac{dy}{dt} &= \frac{1}{2} x
   \left(1-x^2\right)-y  +\frac{v}{20}   +A \cos\left(\frac{t}{10}\right)+\omega^2 \cos (\omega t),  \\
\frac{dv}{dt} &=  -\frac{v}{100}-\frac{y}{2}.
\end{align*}
Here \( v \) is the voltage across the load resistor, \( x \) and \( y \) are auxiliary state variables, and \( \omega \gg 1 \) is the frequency of the environmental vibration. The aim is to investigate the effect that the value of the amplitude  \( A \) of the low-frequency forcing has on the output \( v \).

Averaging, i.e.\ reducing the time-periodic system to an autonomous system with a help of a periodic change of variables \cite{arnoldode}, \cite{SVM07}, is a very helpful tool to study this kind of problem  \cite{guirao}.  To average the vibrational resonance problem above, we begin by introducing new variables
\begin{equation}\label{eq:change}
x=X-\cos (t \omega), \quad y=Y+\omega  \sin (t \omega) + \cos (t \omega), \quad
v=V+\frac{1}{2} \cos (t\omega ),
\end{equation}
chosen to ensure that in the transformed system
\begin{align}\label{eq:XYV}
\frac{dX}{dt} &=  Y+\cos (t\omega), \\
\frac{dY}{dt} &=  -\frac{X}{4}-\frac{X^3}{2}-Y+\frac{V}{20}
+A  \cos \left({ \frac{t}{10}}\right)
\nonumber \\
 &\qquad\qquad +\left(\frac{3 X^2}{2}-\frac{11}{10}\right) \cos (t \omega ) -\frac{3}{4} X \cos (2 t \omega )+\frac{1}{8} \cos (3 t\omega),\nonumber  \\
\frac{dV}{dt} &=  - \frac{V}{100}-\frac{Y}{2}-\frac{101}{200}\cos(t\omega),\nonumber
\end{align}
the highly oscillatory terms have amplitudes of size \(\mathcal{O}(1)\) as \(\omega \rightarrow \infty\).
Suppression of the terms that oscillate with high frequency then results in the averaged system
\begin{align*}
\frac{dX}{dt} &=  Y, \\
\frac{dY}{dt} &=  -\frac{X}{4}-\frac{X^3}{2}-Y+\frac{V}{20}
+A  \cos \left({ \frac{t}{10}}\right),  \\
\frac{dV}{dt} &=  - \frac{V}{100}-\frac{Y}{2},
\end{align*}
whose solutions approximate   {the solution $(X(t),Y(t),V(t))$ of \eqref{eq:XYV} with errors of size
\(\mathcal{O}(1/\omega)\)  in  bounded intervals \(0\leq t\leq T<\infty\)}. Approximations with
\(\mathcal{O}(1/\omega)\) errors  (first-order averaging) to the original state variables \( x\), \(y\) \(
v\), are then obtained from \eqref{eq:change}. Approximations to \( x\), \(y\) \( v\), with errors
\(\mathcal{O}(1/\omega^2)\) (second-order averaging)  may be obtained by changing variables in \eqref{eq:XYV}
so as to reduce the amplitude of the highly oscillatory terms from \(\mathcal{O}(1)\) to
\(\mathcal{O}(1/\omega)\) and then discarding the highly oscillatory terms. The iteration of the procedure
leads successively to approximations with errors \(\mathcal{O}(1/\omega^n)\) for \(n = 3,4,\dots\) (high-order
averaging).

The averaged systems found in this way are nonautonomous since the low-frequency forcing is not averaged out. In order to deal with autonomous averaged problems we introduce two additional real-valued state variables \(C,S\) satisfying
\[\frac{dC}{dt} = -\frac{S}{10},\qquad   \frac{dS}{dt} = \frac{C}{10}\] and with initial conditions \(C(0)= 1\), \(S(0)=1\), so that \(C(t) = \cos(t/10)\), and write  problem \eqref{eq:XYV} as
\begin{align}\label{eq:ode5}
\frac{dX}{dt} &=  Y+\cos (t\omega), \\
\frac{dY}{dt} &=  -\frac{X}{4}-\frac{X^3}{2}-Y+\frac{V}{20}
+A C
\nonumber \\
 &\qquad\qquad +\left(\frac{3 X^2}{2}-\frac{11}{10}\right) \cos (t \omega ) -\frac{3}{4} X \cos (2 t \omega )+\frac{1}{8} \cos (3 t\omega),\nonumber  \\
\frac{dV}{dt} &=  - \frac{V}{100}-\frac{Y}{2}-\frac{101}{200}\cos(t\omega),\nonumber\\
\frac{dC}{dt} & = - \frac{S}{10},\nonumber\\
\frac{dS}{dt} & =  \frac{C}{10}.\nonumber
\end{align}
Note that this system in \(\R^5\) is of the form \eqref{eq:odex} with
\begin{equation*}
f(x,t) = f_a(x) +  \cos (t\omega)\,  f_b(x) +  \cos (2 t\omega)(x) \, f_c+  \cos (3 t\omega) \, f_d(x).
\end{equation*}
It is trivial to write down the derivations  \( F_a\), \dots,  \( F_d\), associated with \(f_a\), \dots,  \( f_d\).
For instance:
\[
F_a = Y\partial_X +  \left(-\frac{X}{4}-\frac{X^3}{2}-Y+\frac{V}{20}+AC\right)\partial_Y+
 \left(- \frac{V}{100}-\frac{Y}{2}\right)\partial_V - \frac{S}{10}\partial_C
+ \frac{C}{10} \partial_S.
\]
Then the derivation corresponding to \(f(x,t)\) is, for each \(t\),
\begin{equation}\label{eq:FFFF}
F_a +  \cos (t\omega)\,  F_b +  \cos (2 t\omega) \, F_c+  \cos (3 t\omega) \, F_d.
\end{equation}

\subsection{Solving the oscillatory problem with word series}
\label{sec:words}

We now introduce the alphabet $\A = \{a,b,c,d\}$,  the corresponding (infinite) set $\W$ of all words \( a\), \dots , \(d\), \(aa\), \(ab\), \dots, \(dd\), \(aaa\), \dots (including the empty word \(1\)) and  the free associative algebra $\RA$  consisting of all the {\em linear combinations} of words with real coefficients. Multiplication \( \star\) in $\RA$ is defined by concatenating words \cite{reu}, which implies that \(1\) is the unit of this (noncommutative) algebra.

Furthermore we consider again  the vector space of linear combinations of words but now endow it with
the (commutative) shuffle product \(\shuffle\) and denote by \(\H\) the resulting (shuffle) algebra. Actually \( \H\) is
a Hopf algebra for the deconcatenation coproduct. This  algebra is {\em graded}; its graded component of degree \(n\), \( n = 0,1,\dots\), consists of the linear combinations of words with \(n\) letters. The dual vector space \(\H^*\) may be identified with the set of all  {\em formal series} \(\alpha\) of the form \(\sum_\W c_ww\) for real \(c_w\in\R\) so that the image \(\langle \alpha, w \rangle\) of the word \(w\) by the linear form \(\alpha\) is the coefficient \(c_w\). Thus
\(\H^*\) is a much larger space than $\RA$. Note that the concatenation product  \( \star\) may be extended from $\RA$ to \(\H^*\) in an obvious way.
We denote by $\G\subset \H^* $ the group  of characters of $\H$ consisting of the elements \(\gamma\in\H^*\)  that satisfy the shuffle relations: \(\langle \gamma, w\shuffle w^\prime\rangle = \langle \gamma, w \rangle \langle \gamma, w^\prime\rangle\) for all words \(w\), \(w^\prime\). The Lie algebra of infinitesimal characters
 $\g\subset \H^*$   consists of those \(\beta \in\H^*\) such that \(\langle \beta,w\shuffle w^\prime\rangle =
\langle \beta,w\rangle   \langle 1, w^\prime\rangle  +  \langle 1,w\rangle \langle \beta, w^\prime\rangle\) for each pair of words. Characters and infinitesimal characters are related through the relations $\G = \exp(\g)$, $\g = \log(\G)$), i.e. each element $\gamma$ in the group is the exponential $1+\beta+(1/2)\beta\star \beta+\cdots$ of the element $\beta = (\gamma-1) - (1/2) (\gamma-1)\star(\gamma-1) + \cdots$
See \cite[Sec. 6.1]{words} for a review of the constructions above.

 To solve \eqref{eq:ode5}, we associate with each letter in \( \A\) the corresponding derivation in the expression \eqref{eq:FFFF}, i.e.\ we set
\begin{equation}\label{eq:Psi}
\Psi(a)=F_a, \quad \Psi(b)=F_b, \quad \Psi(c)=F_c, \quad \Psi(d)=F_d,
\end{equation}
and extend the mapping \( \Psi\)  to an algebra morphism from \( \RA\) to the algebra $\mathrm{End}_{\R}(\CC)$, $D=5$, by setting \( \Psi(aa) = F_aF_a\), \( \Psi(ab) = F_aF_b\), etc.
The free Lie algebra $\L(\A)$ is the linear subspace of $\RA$ consisting of linear combinations of iterated commutators such as \( [a,b] = ab-ba\), \( [a,[a,b]] = a[a,b]-b[a,b] = aab-aba-bab+bba\), \dots (the letters \( a\), \dots, \(b\) are seen as iterated commutators of order \(n=1\)). This Lie algebra is graded; its graded component of degree \( n\), \( n= 1, 2,\dots\), consists of the linear combinations of iterated commutators involving words with \(n\) letters.
 The restriction of \(\Psi\) to  $\L(\A)$  is a Lie algebra morphism $\L(\A) \to \mathrm{Der}_{\R}(\CC)\subset \mathrm{End}_{\R}(\CC) $. Note that, for fixed \(t\), \eqref{eq:FFFF} is the image under \(\Psi\) of the
 element
 \begin{equation}\label{eq:b}
 \beta(t) = a +  \cos (t\omega)\,  b +  \cos (2 t\omega) \, c+  \cos (3 t\omega) \, d\in\L(\A).
 \end{equation}

The \lq abstract\rq\ initial value problem
\begin{equation}\label{eq:absivp}
\frac{d}{dt} \alpha(t) =  \alpha(t) \star \beta(t), \quad
\alpha(0) =1,
\end{equation}
where at the outset \(\alpha(t)\) is sought as a curve in \( \RA\) is such that the mapping \(\Psi\) transforms it into the   operator  initial value problem \eqref{eq:odeX} corresponding to \eqref{eq:ode5}.
We shall solve  \eqref{eq:absivp}, and then the application of \(\Psi\) will lead to a solution of \eqref{eq:ode5}.

 We  recall that
 for  integrable\footnote{More precisely it is sufficient to ask that, for each word, the real-valued function \(\langle \beta(t), w \rangle\) be locally integrable.}  curves \( \beta(t)\) in \(\g\supset \L(\A) \) (and in particular for \(\beta(t)\) in \eqref{eq:b}), the problem
\eqref{eq:absivp} possesses a unique formal solution \(\alpha(t)\) that for each \(t\) lies in the space of formal series \(\H^* \supset \RA\). This solution may be found by a Picard iteration  and is given by a Chen series \cite{reu}
\begin{align*}
\alpha(t) &=\sum_{w \in \W} \langle \alpha(t), w \rangle\,  \, w,
\end{align*}
where for each  $w\in W$ the coefficient, $\langle \alpha(t), w\rangle$ has a known expression as an iterated integral (see e.g.\ \cite[Sec.\ 2.1]{words}, \cite[Sec.\ 2.1]{kurusch} for  details). Furthermore, for each \(t\), $\alpha(t)$ satisfies the shuffle relations and therefore belongs to the group of characters $\G\subset \H^* $. In other words, when seen as a nonautonomous initial value problem to determine a curve \(\alpha(t)\) in the group \(\G\) given a curve \(\beta(t)\) in the algebra \(\g\),  \eqref{eq:absivp} is uniquely solvable (see e.g.\ \cite[Sec 2.2.4]{words}). For each fixed \( t \), \( \Psi(\alpha(t)) \) (\(\Psi\) is applied in the obvious term by term way) is a formal series whose terms belong to \(\mathrm{End}(\CC)\) (they are actually differential operators). Furthermore the fact that \(\alpha(t)\in\G\) implies (see e.g.\ \cite[Sec. 6.1.3]{words}) that the formal series \( \Psi(\alpha(t)) \) satisfies \eqref{eq:autprop}, i.e.\ it is formally an automorphism, and  by proceeding as in the preceding section we then find that the solutions of our problem in \(\R^5\) may be represented as  formal series
\[
x(t) = \sum_{w\in\W} \langle \alpha(t), w \rangle f_w(x(0)),\qquad x(0) \in\R^5,
\]
where the mappings \( f_w:\R^5\rightarrow \R^5\) are the so-called {\em word basis functions} \cite{words}; the \(i\)-th component of \(f_w\) is obtained by applying to the \(i\)-coordinate function \(\chi^i\) the endomorphism \(\Psi(w)\). Series of this form are called {\em word series} \cite{words}, \cite{juanluis}, \cite{china}, \cite{kurusch}, \cite{alfonso}.

\subsection{Averaging with word series}

We now  average \eqref{eq:ode5} by first averaging its abstract version \eqref{eq:b}--\eqref{eq:absivp}.
We seek a \( 2\pi/\omega\)-periodic map $\kappa:\R \to \G$ and a (time-independent) $\bar \beta \in \g$ such that
\begin{equation}\label{eq:sanseb}
\frac{d}{dt} \kappa(t) = \kappa(t) \star \beta(t) - \bar \beta \star \kappa(t).
\end{equation}
It is easily checked  that then $\alpha(t) = \exp(\bar \beta\, t) \star \kappa(t)$; in this way the formal solution \(\alpha(t)\)  of the periodic problem \eqref{eq:b}--\eqref{eq:absivp} is obtained, via the {\em periodic map \(\kappa(t)\)}, from the solution \(\bar \alpha(t) =\exp(\bar \beta\, t) \) of the linear {\em autonomous} problem \( (d/dt) \bar \alpha = \bar\alpha(t)\star \bar\beta\), \(\bar\alpha(0) = 1\) (the averaged problem).

There is some freedom when solving \eqref{eq:sanseb}. In {\em stroboscopic averaging} one imposes the additional condition
 $\kappa(0)=1$,  so that the averaged solution \(\bar\alpha(t)\)  coincides with \(\alpha(t)\) at all stroboscopic times \(t_k=k(2\pi /\omega)\), \( k\in \Z\) \cite{part2}. Alternatively, it is also possible to impose the {\em zero-mean} condition
\begin{equation}\label{eq:zeromean}
\int_0^{2\pi/\omega} \log(\kappa(t)) dt=0.
\end{equation}
(Note that the stroboscopic condition demands that \(\log(\kappa(t))\) vanishes at \(t=0\) rather than on average over a period as in \eqref{eq:zeromean}.)

By proceeding recursively with respect to the number of letters in the words involved, the stroboscopic condition (respectively the zero-mean condition) and \eqref{eq:sanseb} uniquely determine all the coefficients of the formal series \(\bar \beta\) and \(\kappa(t)\).\footnote{For stroboscopic averaging, the recursions that allow the simple computation of the coefficients of
\(\bar \beta\) and \(\kappa(t)\) may be seen in \cite{part2} or \cite{kurusch}, but in those references \(\bar\beta\) and \(\kappa(t)\) are found with the help of an auxiliary transport equation rather than via \eqref{eq:sanseb}.} We have implemented the corresponding recursions in a symbolic manipulation package. As an example, when truncating the series for \(\bar \beta\) so as to only keep  words with three or less letters, we find, in the zero-mean case:
\begin{align*}
\bar \beta^{[3]} &={a} + \frac{1}{\omega^2} \left(
\textstyle \frac{1}{4}\, {abb} -\frac{1}{2} \, {bab} +\frac{1}{4} \, {bba}
+\frac{1}{16}\, {acc}-\frac{1}{8} \, {cac} +\frac{1}{16} \, {cca} \right. \\
& \, \textstyle +\frac{1}{36} \, {add} -\frac{1}{18} \, {dad} +\frac{1}{36} \, {dda}
-\frac{1}{8} \, {bbc}  +\frac{1}{4} \, {bcb} -\frac{1}{8} \, {cbb} \\
&\, \textstyle  \left.
-\frac{1}{12} \, {bcd}+\frac{1}{8} \, {bdc}-\frac{1}{24} \, {cbd}
+\frac{1}{8} \, {cdb} -\frac{1}{24} \, {dbc} -\frac{1}{12} \, {dcb}
\right) \in \g.
\end{align*}
The corresponding result under the stroboscopic condition is similar but includes more terms (40 rather than 19).

Now that the problem \eqref{eq:b}--\eqref{eq:absivp} has been averaged, we apply the transformation \(\Psi\) to average our problem in the Euclidean space \(\R^5\). From \(\bar \beta\) we obtain the formal vector field given by the word-series
\[
\bar f(x) = \sum_{w\in\W} \langle \bar\beta, w \rangle f_w(x),
\]
and from \(\kappa(t)\) we construct the formal periodic change of variables given by the word-series
\[
U(x,t) = \sum_{w\in\W} \langle \kappa(t), w \rangle f_w(x),
\]
such that the solutions \(x(t)\) of \eqref{eq:ode5} are formally given as \( x(t) = U(\bar x(t),t)\) with
\( (d/dt) \bar x = \bar f(\bar x)\).

To deal with {\em bona fide} vector fields and changes of variables, one has to truncate the corresponding formal series.
In our example,
the truncation $\bar \beta^{[3]}$ found above leads to a vector field in \(\R^5\) which  after eliminating the auxiliary variables $C$ and $S$, reduces to the following time-dependent vector field in $\R^3$:
\begin{eqnarray*}
&& Y \, \partial_{X} +
\left( -\frac{X}{4}-\frac{X^3}{2}-Y+\frac{V}{20}+A  \,  \cos( \frac{t}{10})\right) \, \partial_{Y}
  - \left ( \frac{V}{100}+\frac{Y}{2} \right) \, \partial_{V} \\
&&  + \frac{1}{\omega^2} \left( \frac{3 X}{4}  \, \partial_X + \left(-\frac{9 X^3}{4}+\frac{51 X}{640}-\frac{3 Y}{4}\right) \,  \partial_Y  -\frac{3 X }{8} \, \partial_V \right).
\end{eqnarray*}
With the help of a truncated change of variables, the solutions of the corresponding differential system provides \(\mathcal{O}(1/\omega^3)\) approximations to
   \(X(t)\) , \(Y(t)\), \(V(t)\) in \eqref{eq:ode5}. Truncations of this kind and their accuracy are discussed in detail in \cite{orlando} and \cite{part3}.

It is important to emphasize that the construction above is {\em universal:} \(\bar \beta\) and \(\kappa(t)\) would not change if the expressions for the vector fields \(f_a\), \dots, \(f_d\) in \(\R^5\) considered above were replaced by another set of four vector fields in \(\R^D\) with arbitrary \(D\). There is a price to be paid for this generality: in our case there are \(4^n\) words with \( n\) letters and accordingly the complexity of the computations grows very quickly as \(n\) increases. In a laptop computer our computations had to be limited to \(n\leq 8\). In what follows we show how to replace the shuffle Hopf algebra \(\H\) by alternative Hopf algebras which may lead to simpler computations.

\section{General Hopf algebras}
\label{sec:general hopf}

In preceding section  we studied the operator initial value problem  \eqref{eq:odeX} with the help of a
mapping \(\Psi\) whose restriction to
 the free Lie algebra \(\L(\A)\)  is a Lie algebra morphism into the
 algebra of derivations \(\derCC\). We now study the more general situation where \(\L(\A)\)
 is replaced by a graded Lie algebra
\begin{equation}
\label{eq:gradedLA}
\tg
=\bigoplus_{n\geq 1} \g_n,
\end{equation}
with finite-dimensional homogeneous subspaces $\g_n$,\footnote{{It is not essential to assume   that each
$\g_n$  is finite dimesional. The arguments below may be readily adapted to cover the general case under the
proviso that the summation in \eqref{eq:delta} is well defined (cf. second paragraph after (24)).}}
and there are a Lie algebra
homomorphism \( \Psi: \tg \to \derCC \)
and  a curve $\beta:\R \to \tg$ such that
$\Psi(\beta(t)) = F(t)$ for all $t \in \R$.

Note that \(\Psi\) can be uniquely extended to an associative algebra homomorphism from the universal enveloping algebra $U(\tg)$ of \(\tg\) to $\mathrm{End}(\CC)$, which we denote with the same symbol $\Psi$. We shall use the symbol $\star$
to denote the (associative)   product in $U(\tg)$ such that $[G_1,G_2] = G_1 \star G_2- G_2 \star G_1$ for all \(G_1,G_2\in U(\tg)\). In the particular case where \eqref{eq:gradedLA} is the free Lie algebra generated by a finite alphabet \(\A\), $U(\tg)$ coincides with \(\RA\) and \(\star\) is the concatenation product.

\subsection{Solving the operator initial value problem}
\label{ss:soivp}

We denote by
\begin{equation}
\label{eq:Gi}
\{G_i \ : i \in \cI \}
\end{equation}
 a homogeneous basis of the graded Lie algebra (\ref{eq:gradedLA}), where $\cI$ is some set of indices, $\cI = \bigcup_{n\geq 1} \cI_n$, and $\{G_i \ : i \in \cI_n \}$ is a basis of $\g_n$ for each $n\geq 1$. If \(\beta(t) = \sum_{i\in\cI} \lambda_i(t)G_i\), we rewrite (\ref{eq:odeXint}) as
\[
X(t) = I +\sum_{i\in\cI} \int_0^t \lambda_i(t) X(t)\Psi(G_i)\,dt,
\]
an equation that may be solved by the following Picard iteration,
\begin{eqnarray*}
X^{[0]}(t) &=& I\\
X^{[1]}(t) &=& I +\sum_{i\in\cI} \int_0^t \lambda_i(t) X^{[0]}(t)\Psi(G_i)\,dt = I + \sum_{i\in\cI}  \left(\int_0^t \lambda_i(t)\, dt\right) \Psi(G_i),\\
X^{[2]}(t) &=& I +\sum_{i\in\cI} \int_0^t \lambda_i(t) X^{[1]}(t)\Psi(G_i)\,dt\\
\cdots & = & \cdots
\end{eqnarray*}
In this way, one may construct a formal solution $X(t)$ of (\ref{eq:odeX})
of the form
\begin{equation}\label{eq:solpicard}
X(t) = I+\sum_{m\geq 1} \sum_{(i_1,\ldots,i_m) \in \cI^m} a_{i_1,\ldots,i_m}(t) \, \Psi(G_{i_1}) \cdots \Psi(G_{i_m}).
\end{equation}

Unfortunately, this series is unnecessarily complicated as there are many linear dependencies among the endomorphisms of the form \( \Psi(G_{i_1}) \cdots\)\( \Psi(G_{i_m})\). For instance, \(\Psi(G_{i_1})\Psi(G_{i_2}) - \Psi(G_{i_2})\Psi(G_{i_1})\) has to coincide with \(\Psi([G_{i_1},G_{i_2}])\) and therefore must be a linear combination of endomorphisms \(\Psi(G_i)\), \(i\in\cI\).\footnote{In the case where \eqref{eq:gradedLA} is the free Lie algebra generated by a finite alphabet \(\A\), we saw that it is possible to write the solution \(X(t)\) as a series constructed from endomorphisms of the form \( \Psi(G_{a_1}) \cdots\)\( \Psi(G_{a_m})\), with the \(a_i\in\A\); this is far more compact than \eqref{eq:solpicard}, which involves terms \( \Psi(G_{i_1}) \cdots\)\( \Psi(G_{i_m})\) made of arbitrary elements \(G_i\) of the basis.}

 If \(<\) denotes a  total order relation in $\cI$, the
Poincar\'e-Birkhoff-Witt  (PBW) theorem ensures  that the products
\begin{equation}
\label{eq:PWBbasis}
\{G_{i_1} \star \cdots \star G_{i_m}\ : \  i_1 \leq \cdots \leq i_m\}
\end{equation}
 (including the empty product equal to the unit element $\one$) provide  a basis of  $U(\tg)$.
Therefore, it is possible to simplify \eqref{eq:solpicard} by removing the linear dependencies in the right-hand side  so as to  end up with a formal series that only uses endomorphisms of the form
$\Psi(G_{i_1}) \cdots \Psi(G_{i_m})$ with $i_1 \leq \cdots \leq i_m$.
However the basis of $U(\tg)$ given by the PBW theorem may not be the most convenient in practice\footnote{This was illustrated in the preceding section, where we used the basis of \(\RA\) consisting of words.} and in what follows we shall work with an arbitrary homogeneous basis of $U(\tg)$
\begin{equation}
\label{eq:Zbasis}
\{Z_{j} \ : \ j \in \cJ \},\quad \cJ = \bigcup_{n\geq 0} \cJ_n
\end{equation}
where $\cJ$ is some set of indices and $\{Z_i \ : i \in \cJ_n \}$ is, for each  $n\geq 0$, a basis of the graded component of degree \( n\).
Note that the structure constants \( \lambda^i_{i',i''}\) of the basis $\{G_i\ : \ i \in \cI\}$ of the Lie algebra $\tg$,
\[
[G_{i'},G_{i''}] = \sum_{i} \lambda^i_{i',i''} G_{i}, \quad i',i'' \in \cI,
\]
uniquely determine (see Section~\ref{ss:Delta}) the structure constants \(\mu^j_{j',j''}\) of the basis $\{Z_j\ : \ i \in \cJ \}$ of $U(\tg)$,
\begin{equation}\label{eq:mus}
Z_{j'} \star Z_{j''} = \sum_{j} \mu^j_{j',j''} Z_{j}, \quad j',j'' \in \cJ.
\end{equation}

\subsection{Constructing the Hopf algebra}
\label{ss:Hopf}

We now construct a Hopf algebra \(\H\) which will play in the present circumstances the role that the shuffle
Hopf algebra had in the preceding section.  The presentation that follows uses explicitly the choice of basis
in \eqref{eq:Zbasis}; this is convenient for the computational purposes we have in mind. However the  Hopf
algebra \(\H\) that we shall construct is in fact independent of the choice of basis,  {as shown in
Subsection~\ref{ss:Delta} below}. In the particular case where \(\tg\) is  freely generated by the elements of
a finite alphabet \(\A\), the construction below results in the shuffle Hopf algebra of the preceding section.

For each \(j\in\cJ\) we consider the linear  form \(u_j\) on \(U(\tg)\) that
takes the value \(1\) at the element \(Z_j\) and vanishes at each \(Z_{j'}\), \(j'\neq j \) and set
\(\H\) equal to the graded dual \(\bigoplus_{n\geq0} \H_n\) of $U(\tg)$, i.e.\ each \(\H_n\) is the subspace of the linear dual  $U(\tg)^*$ of $U(\tg)$ spanned by the
\(u_j\), \(j\in\cJ_n\).

We define a product in \(\H\) as follows. The algebra $U(\tg)$ possesses a canonical coalgebra structure whose coproduct
$\Delta:U(\tg) \to U(\tg) \otimes U(\tg)$ is uniquely determined by requiring that
\begin{itemize}
\item $\Delta(\beta) = \one \otimes \beta + \beta \otimes \one$, for all $\beta \in \tg$,
\item $\Delta$ be an algebra homomorphism.
\end{itemize}
This coproduct is by construction cocommutative, i.e.\
if, for each $j \in \cJ$,
\[
\Delta(Z_{j} ) = \sum_{j',j'' \in \cJ} \eta^j_{j',j''} \, Z_{j'} \otimes Z_{j''}.
\]
then $\eta^j_{j',j''} =  \eta^j_{j'',j'}$.
 Through the duality between the vector spaces \(U(\tg)\) and \(\H\),
 \(\Delta\) induces the following commutative multiplication operation in $\H$:
 \[
 u_{j'} u_{j''} = \sum_{j \in \cJ}  \eta^j_{j',j''} \, u_{j} = \sum_{j \in \cJ} \langle \Delta(Z_{j}), u_{j'} \otimes u_{j''} \rangle \, u_{j} .
 \]

Similarly, the product \(\star\) in \(U(\tg)\) with structure constants given in \eqref{eq:mus}
induces by duality a coproduct $\Delta:\H \to \H \otimes \H$ given by
\begin{equation}
\label{eq:Delta}
\Delta (u_j) =  \sum_{ j',j'' \in \cJ} \mu^j_{j',j''} \  u_{j'} \otimes u_{j''},\qquad j \in \cJ
\end{equation}  (our hypotheses ensure that the summation
in \eqref{eq:Delta} has finitely-many non-zero terms and is therefore well defined). In this way \( \H\) is a connected, commutative, graded Hopf algebra.

We now turn to the dual vector space \( \H^*\). Each element $\gamma \in \H^*$ may be represented as a formal series
\[
\gamma = \sum_{j \in \cJ} \langle \gamma, u_{j} \rangle \, Z_j,
\]
where \(\langle \gamma, u_{j} \rangle\) is the image of \(u_j\in\H\) by the linear form \(\gamma\). Thus \(\H^*\) may be seen as a superspace of \(U(\tg)\).  The associative algebra structure  of $U(\tg)$ may be extended naturally to $\H^*$: for $\gamma',\gamma'' \in \H^*$, the series that represents their product \( \gamma = \gamma' \star \gamma'' \in \H^*\) is given by
\begin{eqnarray*}
\sum_{j \in \cJ} \langle \gamma, u_{j} \rangle \, Z_{j}  &=&
\left(\sum_{j' \in \cJ} \langle \gamma', u_{j'} \rangle \, Z_{j'} \right) \star
\left(\sum_{j'' \in \cJ} \langle \gamma'', u_{j''} \rangle \, Z_{j''} \right) \\
&=&
\sum_{j',j'' \in \cJ}  \langle \gamma', u_{j'} \rangle\,  \langle \gamma'', u_{j''} \rangle \, Z_{j'} \star Z_{j''}  \\
&=& \sum_{j',j'' \in \cJ}  \langle \gamma', u_{j'} \rangle\,  \langle \gamma'', u_{j''} \rangle \, \sum_{j\in \cJ}  \mu^j_{j',j''} Z_{j} \\
&=& \sum_{j \in \cJ} \left( \sum_{ j',j'' \in \cJ} \mu^j_{j',j''} \,  \langle \gamma', u_{j'} \rangle\,  \langle \gamma'', u_{j''} \rangle \right)\, Z_{j}.
\end{eqnarray*}
In other words
\[
 \langle \gamma, u_{j} \rangle=
 \sum_{ j',j'' \in \cJ} \mu^j_{j',j''} \,  \langle \gamma', u_{j'} \rangle\,  \langle \gamma'', u_{j''} \rangle
\]
i.e.\ the product \(\star\) in \(\H^*\) corresponds via duality to the coproduct \eqref{eq:Delta} in \(\H\).
The group of characters of \(\H\) and the Lie algebra of infinitesimal characters are
\[
\label{eq:G}
\G = \left\{\gamma \in \H^*\ : \  \langle \gamma, u_{j'} u_{j''} \rangle  = \langle \gamma, u_{j'} \rangle \,  \langle \gamma, u_{j''} \rangle \right\},
\]
and
\[
\g = \left\{\gamma \in \H^*\ : \  \langle \gamma, u_{j'} u_{j''} \rangle  = \langle \gamma, u_{j'} \rangle \,  \langle \one, u_{j''} \rangle +
\langle \one, u_{j'} \rangle \,  \langle \gamma, u_{j''} \rangle \right\},
\]
respectively.
These are related by a  bijection $\exp:\g \to \G$,
as we saw in the particular case considered in the preceding section.

The abstract initial value problem
\begin{equation}\label{eq:abstract2}
 \frac{d}{dt} \alpha(t) = \alpha(t)* \beta(t), \quad \alpha(0)=\one,
\end{equation}
with \(\beta(t)\) any given integrable curve in \(\g\)  possesses a solution that for each \(t\) is an element of \(\G\). This solution may be computed by finding its coefficients by recursion with respect to the grading. In particular this is so for the curve such that
$\Psi(\beta(t)) = F(t)$ for all $t \in \R$, whose existence we assumed at the beginning of this section. We next translate this result into a result for the operator problem.

\subsection{Back to the operator initial value problem}

The mapping \(\Psi\) may be defined on \(\H^*\supset U(\tg)\) as an algebra map from $\H^*$ to the direct product algebra $\prod_{n\geq 0} \mathrm{End}(\CC)$ sending each $\gamma \in \H^*$ to
\[
\Psi(\gamma) =
\sum_{n\geq 0} \sum_{j' \in \cJ_n} \langle \gamma, u_{j} \rangle \, \Psi(Z_{j})  =
\sum_{j \in \cJ} \langle \gamma, u_{j} \rangle \, \Psi(Z_{j}).
\]
 For the product of two
formal series of endomorphisms we have
\[
\left(\sum_{j' \in \cJ} \langle \gamma', u_{j'} \rangle \, \Psi(Z_{j'}) \right)
\left(\sum_{j'' \in \cJ} \langle \gamma'', u_{j''} \rangle \, \Psi(Z_{j''}) \right) =
\sum_{j \in \cJ} \langle \gamma, u_{j} \rangle \, \Psi(Z_{j}),
\]
where $\gamma = \gamma' \star \gamma'' \in \H^*$.

The solution \(\alpha(t)\)  of the abstract initial value problem
leads to the following formal solution of \eqref{eq:odeX} (a compact alternative to \eqref{eq:solpicard})
\[
X(t) = \sum_{j \in \cJ} \langle \alpha(t), u_j \rangle\, \Psi(Z_j),
\]
and we shall check presently that (\ref{eq:autprop}) is formally satisfied in order to obtain a formal solution of the initial value problem \eqref{eq:odex}.

From the definition of \(\Delta\), for arbitrary $\chi_1,\chi_2 \in \CC$,
\[
\Psi(Z_{j})(\chi_1 \chi_2) = \sum_{j',j'' \in \cJ} \eta^j_{j',j''} \, (\Psi(Z_{j'})\chi_1) \, (\Psi(Z_{j''})\chi_2)
\]
and it follows by duality that, for each \(\gamma \in \H^*\),
\[
\sum_{j \in \cJ} \langle \gamma, u_{j} \rangle \, \Psi(Z_{j})(\chi_1 \chi_2) =
\sum_{j',j'' \in \cJ} \langle \gamma, u_{j'} u_{j''} \rangle \, (\Psi(Z_{j'})\chi_1) \, (\Psi(Z_{j''})\chi_2).
\]
If, in particular, \( \gamma \in\G\), then the right-hand side of the last equality coincides with
\[
\left( \sum_{j' \in \cJ} \langle \gamma, u_{j'} \rangle \, \Psi(Z_{j'})\chi_1\right)
\left( \sum_{j'' \in \cJ} \langle \gamma, u_{j''} \rangle \, \Psi(Z_{j''})\chi_2\right),
\]
i.e.\ the formal series \(\sum_{j \in \cJ} \langle \gamma, u_{j} \rangle \, \Psi(Z_{j})\) is formally an automorphism. This is in particular true, for each \(t\), for the series \(X(t)\) above, since we know that \(\alpha(t)\in\G\).

\subsection{Averaging with more general Hopf algebras}
 An abstract problem of the form \eqref{eq:abstract2} with \(\beta(t)\) \(2\pi/\omega\)-periodic with values in \(\g\) may be averaged with the help of equation \eqref{eq:sanseb} exactly as we saw in the case of the shuffle Hopf algebra. The result may be then transferred, via the morphism \(\Psi\), to average periodically forced systems \eqref{eq:odex} .

\subsubsection{Averaging with decorated rooted trees}
\label{sss:rootedtrees}
 As an illustration we take up again the task of averaging \eqref{eq:ode5} but this time we work with the
 Grossman--Larson graded Lie algebra  of rooted trees~\cite{GrossmanLarson89} with vertices decorated by letters of the alphabet \(\A= \{a,b,c,d\}\). We  use once more \eqref{eq:Psi} and extend \(\Psi\) to a Lie algebra morphism from the Grossman-Larson Lie algebra to the Lie algebra of derivations \(\derCC\). In the construction above, \(\H\) is the Connes-Kreimer Hopf algebra of rooted trees and the group of characters \(\G\) is the Butcher group.

As an example we find, under the zero-mean condition and truncating the contributions of trees with four or more vertices:
\begin{align*}
\bar \beta^{[3]} &= \underline{a} + \frac{1}{\omega^2} \left(\textstyle
\frac{1}{4 }\, \underline{a[  b[b]]} - \frac{1}{ 4}\,  \underline{b[a b]}  {+\frac{1}{ 4}\,  b[b[a]]} + \frac{1}{4 }\, \underline{a[b^2]}  -\frac{1}{ 2}\,  \underline{b[a[b]]}\right. \\
& \textstyle
 - \frac{1}{8}\,  \underline{c[a[c]]}
{+ \frac{1}{16}\, a[c^2]  - \frac{1}{16}\,  c[a c] +\frac{1}{16 }\, a[c[c] ]+ \frac{1}{16}\, c[c[a]]  }\\
& \textstyle { + \frac{1}{36 }\, a[d[d]] + \frac{1}{36}\,  d[d[a]] - \frac{1}{36}\,  d[a d]
+ \frac{1}{36 }\, a[(d)^2] - \frac{1}{18}\,  d[  a[d]]  } \\
&  \textstyle { - \frac{1}{8}\,  b[b[c]] + \frac{1}{8}\,  b[b c] - \frac{1}{8}\,  c[b[b]]
+\frac{1}{ 4}\,  b[c[b]] - \frac{1}{8}\,  c[(b)^2] - \frac{1}{8}\,  c[b[b]]   } \\
& \textstyle { +\frac{1}{ 4}\,  b[c[b]] - \frac{1}{8}\,  c[(b)^2]
-\frac{1}{1 2}\,  b[c[d]] + \frac{1}{1 2}\,  c[b d] - \frac{1}{1 2}\,  d[c[b]]  } \\
& \textstyle  { + \frac{1}{8}\,  b[d[c]]  + \frac{1}{8}\,  c[d[b]]- \frac{1}{8}\,  d[b c]
  - \frac{1}{2 4}\,  c[b[d]]  + \frac{1}{2 4}\,  b[c d]}
   \textstyle \left.  {  -\frac{1}{2 4}\,  d[b[c]] }
\right).
\end{align*}
Here the notation for rooted trees is as follows:
\begin{itemize}
\item \(a\) denotes the one-vertex rooted tree where the root is decorated with the symbol \(a\),
\item
\(d[b[c]]\) denotes the \lq tall\rq\ rooted tree where  the decoration \(d\) corresponds to the root, the vertex decorated by \(b\) is linked to the root, and the vertex decorated with \(c\) is linked to the vertex decorated with \(b\),
\item  \(d[b c]\) denotes the \lq bushy\rq\ rooted tree where \(d\) is the decoration of the root and the vertices with decoration \(b\) and \(c\) are linked to the root, etc.
\end{itemize}

As in the case of words, results on the abstract problem are transferred to Euclidean space with the help of \(\Psi\). We again find a system
\( (d/dt) \bar x = \bar f(\bar x)\), where \(\bar f\) is a formal series of vector fields in \(\R^5\) and a \(2\pi/\omega\)-periodic formal change of variables
\(x = U(\bar x,t)\) such that solutions \(x(t)\) of \eqref{eq:ode5} are formally given as \( x(t) = U(\bar x(t),t)\). Now the formal series are indexed by rooted trees rather than by words, i.e.\ they are B-series \cite{part1}, \cite{part2}, \cite{china}.

What is the advantage of using rooted trees rather than words? The expression for \(\bar \beta^{[3]}\) displayed above, with 33 rooted trees, is obviously more involved than its counterpart with words involving 19 words. However the images by \(\Psi\) of many trees vanish. For instance, in the display above only the five rooted trees underlined have a nonzero image. This may be exploited by working in
the quotient by $\mathrm{ker} (\Psi)$ of the Lie algebra of rooted trees,
thereby decreasing the dimension of the graded components, which allows symbolic manipulation packages to take the expansions to higher order. A further reduction may be achieved by noting that \(\bar\beta\) has to be a Lie element, i.e.\ it must be expressible in terms of commutators. We may then work in the Lie subalgebra  generated by \(a\),\dots, \(d\) of the previous quotient subalgebra. For instance for the display above we find the compact expression
\[
a + \frac{1}{\omega^2}  \left(\textstyle \frac{1}{4 }\, [b,[b,a]]  - \frac{1}{8}\, [c,[a,c]]\right).
\]

\subsubsection{Averaging in a Lie algebra generated by monomial vector fields}
\label{sec:monomials}

We have just seen how to work in a Lie algebra better suited to the concrete example at hand than the Lie algebra corresponding to words. Another possibility in this direction is to use a graded Lie algebra generated by monomial vector fields.  In our example, we consider the graded Lie algebra $\tg=\bigoplus_{n\geq 1}\g_n$ of vector fields generated by the monomial vector fields
\begin{eqnarray*}
&&U \, \partial_Y, U \, \partial_Z, V \, \partial_V, V \, \partial_Y,
X^3 \, \partial_Y,  X^2 \, \partial_Y,
 X \, \partial_Y, Y \, \partial_V,Y \, \partial_X,Y
   \, \partial_Y,Z \, \partial_U,\, \partial_V,\, \partial_X,\, \partial_Y,
\end{eqnarray*}
each of them belonging to $\g_1$. That is, we may multiply each of the monomial vector fields above by a bookkeeping parameter $\epsilon$, so that monomial vector fields affected by a $n$-th power of $\epsilon$ belongs to $\g_n$. In that case, the map $\Psi:\tg \to \derCC$ corresponds to replacing $\epsilon$ by 1.
With the help of this graded Lie algebra a symbolic package in a laptop computer may carry the computations necessary to perform \(n\)-order  averaging up to \( n = 16\), while, as mentioned above, with words we could not go beyond \(n = 8\).

\subsubsection{{Summary}}
{ The technique in \cite{part2} or \cite{kurusch} summarized in Section~\ref{sec:an example} averages
oscillatory differential systems like \eqref{eq:ode5} by first reformulating them in an abstract form
\eqref{eq:absivp} that is integrated in the group of characters \(\G\)  of the shuffle Hopf algebra.  The
solution of the abstract problem is then averaged and the result transferred back to the original system.
While the technique is completely general, its computational complexity grows very quickly with the required
accuracy. We have just seen that, by working with alternative Hopf algebras,  it is possible to diminish the
computational cost and achieve substantially higher orders of accuracy in  a given computing environment. }

\subsection{{Explicit construction of the coproduct $\Delta$}}
\label{ss:Delta}

In this subsection, we focus on determining, in a form suitable for actual computations, the coproduct $\Delta$ of the Hopf algebra constructed in Subsection~\ref{ss:Hopf} from a given graded Lie algebra $\tg$.

In the particular case where $\tg$ is the free Lie algebra generated by an alphabet $\A$,  $U(\tg)$ is isomorphic to the algebra \(\RA\). It therefore possesses a basis  (\ref{eq:Zbasis}) with $\mathcal{J}$ given by the set of words on the alphabet $\A$ (the operation $\star$ corresponds to the concatenation of words). The coproduct $\Delta$ of $\H$ expressed in that basis indexed by words is then the deconcatenation coproduct, which has
 a particularly simple form.

For an arbitrary graded Lie algebra $\tg$, the coproduct $\Delta:\H \to \H \times \H$ can be uniquely determined from the structure constants $\lambda^i_{i',i''}$ of a basis (\ref{eq:Gi}) of $\tg$.
Recall that  the Poincar\'e-Birkhoff-Witt  (PBW) basis of $U(\tg)$ is a basis (\ref{eq:Zbasis}) indexed by the set
\begin{equation}
\label{eq:J}
\cJ =\{e\} \cup  \{(i_1,\ldots,i_m) \in \cI^m \ : \  m\geq 1, \   i_1 \leq \cdots \leq i_m \},
\end{equation}
where, as above, \(\cI\) is the set of indices for the homogeneous basis of the graded Lie algebra \(\tg\) and
\( \cI^m \) is the product \( \cI\times \cdots \times \cI \) (\( m \)-times).
 The empty index $e$ is associated with the unit $\one$ of $U(\tg)$, that is $Z_{e}=\one$.
 For $j=(i_1,\ldots,i_m) \in \cJ$, the elements $Z_j$ are defined by (\ref{eq:PWBbasis}) scaled by the inverse of the product of some factorials. More precisely,
 $Z_j = 1/j! \, G_{i_1} \star \cdots \star G_{i_m}$, where $j!=m!$ if $i_1=i_2=\cdots = i_m$, and $j!=k! (i_{k+1},\ldots,i_m)!$ if $i_1=\cdots = i_k < i_{k+1}$.
It is well known~\cite{Bourbaki} that the basis $\{u_j \ : \ j \in \cJ\}$ of $\H$ dual to the PWB basis of  $U(\tg)$ satisfies that $u_j=v_{i_1} \cdots v_{i_m}$ for $j=(i_1,\ldots,i_m) \in \cJ$,  where $v_i := u_{(i)}$ for each $i \in \cI$. We thus have that, as an algebra, $\H$ is a polynomial algebra on the commuting indeterminates $\{v_i\ : \ i \in \cI\}$, that is, the symmetric algebra $S(V)$ over the vector space  $V$ spanned by $\{v_i\ : \ i \in \cI\}$.

Since $\Delta:\H \to \H\otimes\H$ is an algebra map, it is enough to determine $\Delta(v_i) \in \H \otimes \H$ for $i \in \cI$ from the structure constants $\lambda^i_{i',i''}$. However, existing algorithms for that task  are rather involved. Fortunately, there are bases of $U(\tg)$ that are computationally more convenient than the PWB basis for our purposes.
This may be illustrated  for the Grossman-Larson graded Lie algebra $\tg$ considered in Subsection~4.4: it has
a basis (\ref{eq:Gi}) indexed by the set $\cI:=\mathcal{T}$ of rooted trees decorated by the letters of the alphabet $\A=\{a,b,c,d\}$  providing a simple description of the Lie bracket in terms of grafting of rooted trees~\cite{GrossmanLarson89}.
 In that case, the index set (\ref{eq:J}) can be identified with the set $\mathcal{F}$ of forest of rooted trees over $\A$.
 As noted before, the corresponding commutative Hopf algebra $\H$ is the Connes-Kreimer Hopf algebra over the alphabet $\mathcal{A}$.  The construction of $\H$  described above in terms of the PWB basis of $U(\tg)$ realizes the Hopf algebra $\H$ as the polynomial algebra on the commuting indeterminates $\{v_i\ : \ i \in \mathcal{T}\} \subset U(\tg)^*$.  However, the expresions of $\Delta(v_i)$ for $i \in \mathcal{T}$ in that representation of $\H$ is rather cumbersome, and fails to reflect the nice combinatorial nature of the coproduct of the Connes-Kreimer Hopf algebra.

The task of determining the commutative graded Hopf algebra $\H$ from  the
structure constants $\lambda^i_{i',i''}$ of a basis (\ref{eq:Gi}) of $\tg$ can be reformulated in terms of a graded Lie coalgebra~\cite{Michelis1980} structure $(V,\delta)$ related to the graded Lie algebra $\tg$ as follows.
Let $V=\bigoplus_{n\geq 1} V_n$ be a
graded vector space  with a homogeneous basis
$\{v_i\ : \ i \in \cI\}$, and consider the graded linear map $\delta: V \to V \otimes V$ defined by
\begin{equation}
\label{eq:delta}
\delta(v_i) =  \sum_{i',i'' \in \cI} \lambda^i_{i',i''} \, v_{i'} \otimes v_{i''},  \mbox{ for }  i \in \cI.
\end{equation}
Since the coefficients $\lambda^i_{i',i''}$ are the structure constants with respect to a basis (\ref{eq:Gi}) of the graded Lie algebra $\tg$, $(V,\delta)$ is by construction a graded Lie coalgebra. The dual map $\delta^*:V^* \otimes V^* \to V^*$ endows the linear dual $V^*$ with a structure of Lie algebra such that
$\tg$ is isomorphic to a Lie subalgebra of the Lie algebra $V^*$.

Now, our original task can be formulated as follows: find an algebra map $\Delta:S(V) \to S(V) \otimes S(V)$ satisfying the following two conditions:
\begin{itemize}
\item the coproduct $\Delta$ endows the symmetric algebra $S(V)$ with a graded connected Hopf algebra structure $\H$,
\item the linear map $\hat \delta: V \to V \otimes V$ such that, for each $v \in V$, $\hat \delta(v)$ is the projection of $\Delta(v)$   onto $V \otimes V$  satisfies the relation
\begin{equation}
\label{eq:deltahdelta}
\delta = \hat \delta - \tau \circ \hat \delta,
\end{equation}
where $\tau:V\otimes V \to V \otimes V$ is defined by $\tau(v\otimes v') = v' \otimes v$.
\end{itemize}
Such an algebra map $\Delta$ is not unique,  but the corresponding coalgebra structure on $S(V)$ is unique up to isomorphisms.\footnote{It is actually the universal coenvelopping coalgebra~\cite{Michelis1980} of the Lie coalgebra $V$.}  

Observe that here there is no need to assume that the homogeneous vector subspaces $\tg_n$ are finite dimensional. One only needs to assume that the Lie coproduct (i.e.\ Lie cobracket) $\delta$ is well defined, or in other words, that the structure constants $\lambda^i_{i',i''}$ of the Lie bracket with respect to a basis (\ref{eq:Gi}) are such that, for each $i \in \cI$, the sum in (\ref{eq:delta}) is well defined.

Notice also that the choice of the basis for $V$ plays no role in this formulation in terms of the Lie coalgebra $(V,\delta)$.  It can be shown that, for a given basis (\ref{eq:Gi}) of the Lie algebra $\tg$,
 each choice of $\Delta$ gives rise, after dualization, to a different basis (\ref{eq:Zbasis}) (indexed by the set (\ref{eq:J})) of $U(\tg)$

We will next make use of the concept of pre-Lie algebra. (We refer to~\cite{Manchon2011} for a survey on pre-Lie algebras.)
Assume  now that there exists a graded linear map $\hat \delta: V \to V \otimes V$ satisfying (\ref{eq:deltahdelta}). According to Proposition 3.5.2. in~\cite{GS2008} (see also Theorem~5.8 in~\cite{LR2010}),  if  $(V,\hat \delta)$ is a graded pre-Lie coalgebra (that is, $(V^*,\hat \delta^*)$ is a pre-Lie algebra) then there exists a graded  algebra map $\Delta:S(V) \to S(V) \otimes S(V)$ satisfying the following conditions:
\begin{itemize}
\item the coproduct $\Delta$ endows the symmetric algebra $S(V)$ with a graded connected Hopf algebra structure $\H$,
\item for each $v \in V$, $\Delta(v)- 1 \otimes v - v \otimes 1 \in S(V) \otimes V$,
\item for each $v \in V$,  $\hat \delta(v)$ is the projection to $V \otimes V$ of $\Delta(v)$.
\end{itemize}
(Actually, the converse also holds~\cite{GS2008}.) Furthermore such an algebra map $\Delta$ is uniquely determined by $\hat \delta$. In ~\cite{GS2008}, a recursive procedure to determine $\Delta(v)$ for each $v \in S(V)$ in terms of the pre-Lie coproduct $\hat \delta$ is presented. In Theorem~\ref{th:hdeltaDelta} below, we suggest an alternative recursive procedure.

It is worth mentioning that the dual basis of the basis of monomials $v_{i_1} \cdots v_{i_m}$ of $S(V)$ corresponding to the uniquely determined coproduct $\Delta$ is precisely the basis of the universal enveloping algebra $U(\tg)$ of the pre-Lie algebra $\tg$ considered in~\cite{OG2005}.

Coming back to the Grossman-Larson graded Lie algebra $\tg$ of rooted trees over an alphabet $\A$, it is known that it is the free pre-Lie algebra over the set $\A$~\cite{chapoton}. The Lie algebra morphism $\Psi:\tg \to \derCC$ considered in Subsection~\ref{sss:rootedtrees} is actually the unique extension of \eqref{eq:Psi} to a  pre-Lie algebra morphism from the Grossman-Larson Lie algebra over the alphabet $\{a,b,c,d\}$ to the Lie algebra of derivations \(\derCC\). The corresponding pre-Lie coproduct $\hat \delta: V \to V \otimes V$
can be nicely described in terms of all the splittings of the rooted tree in two parts by successively removing each of the edges. The coproduct $\Delta: V \to S(V) \otimes S(V)$ uniquely determined in Proposition 3.5.2 of~\cite{GS2008} coincides  with the Connes-Kreimer coproduct defined in terms of the so called admissible cuts of rooted trees and forests.

This construction of the Hopf algebra $\H$ from a pre-Lie coalgebra structure $(V,\hat \delta)$ may seem rather restrictive. However,
any graded Lie coalgebra $V=\bigoplus_{n\geq 1} V_n$ with Lie coproduct  $\delta$ admits at least one pre-Lie coproduct $\hat \delta$ satisfying (\ref{eq:deltahdelta}), as we will show later on.

Let $\H=S(V)$ be the graded connected commutative Hopf algebra uniquely determined by a given pre-Lie coalgebra $(V,\hat \delta)$, with coproduct $\Delta:\H \to \H \otimes \H$ and antipode $S:\H \to \H$.
We define the grading operator $\rho: \H \to \H$ given by $\rho(u) = n \, v$ if $u$ belongs to the graded component $\H_n$.
 Furthermore, we define the derivation $\partial:\H^* \to \H^*$ as  the dual map of $\rho$, i.e.\
$\langle \partial(\gamma), u \rangle = \langle \gamma, \rho(u) \rangle$ for each $\gamma \in \H^*$ and each $u \in \H$.

We also use the (generalized) Dynkin operator $D$ as considered in~\cite{EFGBP2007}, \cite{MKL2011} for graded connected commutative Hopf algebras. (See~\cite{PR2003}, \cite{MP2013} and references therein for the generalized Dynkin operator in the cocommutative case.) The Dynkin operator $D:S(V) \to V$ of  $\H=S(V)$ is the convolution $D:=\rho * S$ (in the references above, $D$ is actually defined as $S * \rho$) of the antipode and the grading operator,  that is,
\begin{equation*}
D:=\mu_{\H} \circ (\rho \otimes S) \circ \Delta,
\end{equation*}
where $\mu_{\H}:\H \otimes \H \to \H$ is the multiplication map of the algebra $\H=S(V)$. It is not difficult to check that the convolution of $D$ with the identity $\mathrm{id}_{\H}$ in $\H$ coincides with $\rho$, that is,
\begin{equation}
\label{eq:rhoD}
\rho=\mu_{\H} \circ (D \otimes \mathrm{id}_{\H}) \circ \Delta.
\end{equation}

The Dynkin operator has the property that
\begin{equation}
\label{eq:D(u)}
D(u)=0 \quad \mbox{for all} \quad u \in V^2S(V),
\end{equation}
and we thus have that, for each $v \in V$,
\begin{equation}
\label{eq:l1}
(D \otimes \mathrm{id}_{\H}) \circ \Delta(v) = D(v) \otimes 1 + (D \otimes \mathrm{id}_{V}) \circ \hat \delta(v).
\end{equation}
This in turn implies, together with (\ref{eq:rhoD}) the following result, which allows to inductively determine $D(v)$ for each $v \in V$ in terms of the pre-Lie coproduct $\hat \delta$.
\begin{lemma}
\label{l:1}
For each $v \in V$,
\begin{equation}
\label{eq:D(v)}
D(v) = \rho(v) - \mu_{\H} \circ (D \otimes \mathrm{id}_{V}) \circ \hat \delta(v).
\end{equation}
\end{lemma}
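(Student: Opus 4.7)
The plan is a one-line calculation combining the identities (\ref{eq:rhoD}) and (\ref{eq:l1}). I would apply the multiplication map $\mu_{\H}$ to both sides of (\ref{eq:l1}). On the left-hand side, $\mu_{\H} \circ (D \otimes \mathrm{id}_{\H}) \circ \Delta(v)$ is exactly $\rho(v)$ by (\ref{eq:rhoD}). On the right-hand side, the summand $\mu_{\H}(D(v) \otimes 1)$ collapses to $D(v)$ because $1$ is the unit of $\H$, while the second summand is already $\mu_{\H} \circ (D \otimes \mathrm{id}_{V}) \circ \hat \delta(v)$. Rearranging the resulting equation $\rho(v) = D(v) + \mu_{\H} \circ (D \otimes \mathrm{id}_{V}) \circ \hat \delta(v)$ yields the claim.

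For completeness I would also briefly re-derive (\ref{eq:l1}) itself, since the whole argument rests on it. The defining properties of $\Delta$ listed above, together with counitality, force $\Delta(v) = 1 \otimes v + v \otimes 1 + \hat \delta(v) + r$ with $r \in V^2 S(V) \otimes V$. Applying $D \otimes \mathrm{id}_{\H}$: the first summand vanishes since $D(1) = \rho(1)\, S(1) = 0$ (because $V$ is concentrated in positive degrees, so $\rho(1) = 0$); the second gives $D(v) \otimes 1$; the third becomes $(D \otimes \mathrm{id}_{V}) \circ \hat \delta(v)$, as $\hat \delta(v) \in V \otimes V$ allows restricting $\mathrm{id}_{\H}$ to $\mathrm{id}_{V}$ on the right factor; and the fourth vanishes by (\ref{eq:D(u)}) applied to the first tensor factor of $r$.

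No serious obstacle is expected: the lemma is pure bookkeeping once (\ref{eq:rhoD}), (\ref{eq:D(u)}), and the defining properties of $\Delta$ are in hand. The only minor subtlety is the identification $(D \otimes \mathrm{id}_{\H}) \circ \hat \delta(v) = (D \otimes \mathrm{id}_{V}) \circ \hat \delta(v)$, which relies precisely on the inclusion $\hat \delta(V) \subseteq V \otimes V$ built into the pre-Lie coalgebra hypothesis. Since the identity recursively expresses $D(v)$ for $v \in V_n$ in terms of $D$ evaluated on elements of $V_k$ with $k < n$ (via the grading carried by $\hat \delta$), the lemma also delivers the inductive scheme for computing $D$ advertised in the surrounding text.
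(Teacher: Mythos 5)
Your proposal is correct and follows exactly the paper's own route: the lemma is obtained by applying $\mu_{\H}$ to both sides of (\ref{eq:l1}) and invoking (\ref{eq:rhoD}) on the left, which is precisely what the paper means when it says the result follows from (\ref{eq:l1}) ``together with (\ref{eq:rhoD})''. Your additional re-derivation of (\ref{eq:l1}) from the normalized form of $\Delta(v)$, the vanishing $D(1)=0$, and (\ref{eq:D(u)}) is also sound and merely makes explicit a step the paper asserts without proof.
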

\begin{theorem}
 \label{th:hdeltaDelta}
 Let $(V,\hat \delta)$ be a graded pre-Lie coalgebra, and consider the linear map $D:S(V)\to V$ determined by (\ref{eq:D(u)}) and (\ref{eq:D(v)}).  The symmetric algebra $\H=S(V)$ becomes a
 graded connected Hopf algebra with the coproduct $\Delta$ determined as the unique graded algebra map $\Delta:\H \to \H \otimes \H$ such that
 \begin{equation}
\label{eq:Delta(v)}
\Delta(v) = 1 \otimes v +  v \otimes 1 +\bar \Delta(v), \quad v \in V,
\end{equation}
where the linear map $\bar \Delta: V \to \H \otimes V$ is  uniquely determined by the identity
\[
(\rho \otimes \id_V) \circ \bar \Delta = (D-\rho) \otimes 1 +  ( \mu_{\H} \otimes \id_V )  \circ ( D \otimes \Delta) \circ \hat \delta.
\]
\end{theorem}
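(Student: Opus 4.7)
The plan is to invoke the existence statement of Proposition~3.5.2 in \cite{GS2008}, which guarantees a graded connected commutative Hopf algebra structure on $\H=S(V)$ whose coproduct satisfies $\Delta(v)=1\otimes v+v\otimes 1+\bar\Delta(v)$ with $\bar\Delta(v)\in\H\otimes V$ projecting to $\hat\delta(v)$ in $V\otimes V$, and then to verify that this coproduct obeys the claimed recursive identity and that the identity determines $\bar\Delta$ uniquely.

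To derive the identity, I would apply $\rho\otimes\id_{\H}$ to $\Delta(v)$, rewrite $\rho$ via the convolution formula (\ref{eq:rhoD}), and use coassociativity to obtain
\[
(\rho\otimes\id_{\H})\circ\Delta=(\mu_{\H}\otimes\id_{\H})\circ(D\otimes\Delta)\circ\Delta.
\]
Expanding $\Delta(v)=1\otimes v+v\otimes 1+\bar\Delta(v)$ and using $D(1)=0$ together with (\ref{eq:D(u)}), so that $D$ annihilates every summand of $\bar\Delta(v)$ whose first factor lies in $V^{2}S(V)$, only the summands whose first factor is in $V$ survive the action of $D$; these summands are precisely $\hat\delta(v)$. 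The resulting equation lies a priori in $\H\otimes\H$: its projection on $\H\otimes V$ matches $(\rho\otimes\id_{V})\bar\Delta(v)$, while Lemma~\ref{l:1} guarantees that the terms with second tensor factor equal to $1$ combine with the contribution of the $v_{i}\otimes 1$ summands inside the $\Delta(v_{i})$ appearing in $(D\otimes\Delta)\hat\delta(v)$ to yield exactly the $(D-\rho)\otimes 1$ term in the statement.

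For uniqueness, the crucial observation is that for $v\in V_{n}$, $\bar\Delta(v)$ lies in $\bigoplus_{k=1}^{n-1}\H_{k}\otimes V_{n-k}$; on each summand $\H_{k}\otimes V_{n-k}$ the operator $\rho\otimes\id_{V}$ acts as multiplication by the nonzero integer $k$, and hence is invertible on the relevant subspace. Therefore $(\rho\otimes\id_{V})\bar\Delta(v)$ determines $\bar\Delta(v)$. The right-hand side of the identity depends only on the given datum $\hat\delta$, on the values of $D$ on $V$ (computable recursively via Lemma~\ref{l:1}), and on $\Delta$ evaluated on the second tensor factor of $\hat\delta(v)$, whose homogeneous degree is strictly less than $n$. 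Thus the identity constitutes a well-posed recursion on the grading.

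The principal technical obstacle I anticipate is the bookkeeping required to see that the right-hand side, although it superficially contains summands whose second tensor factor equals $1$, does in fact reassemble into an element of $\H\otimes V$ by virtue of Lemma~\ref{l:1}. Making this cancellation explicit and tracking the grading throughout is where the effort goes; once this is done, the remaining ingredients reduce to coassociativity, the Dynkin property (\ref{eq:D(u)}), the definition of $D$, and the existence and uniqueness statements already obtained in \cite{GS2008}.
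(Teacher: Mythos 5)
Your proposal is correct and follows essentially the same route as the paper: the identity is obtained from the convolution formula (\ref{eq:rhoD}), coassociativity of $\Delta$, and the Dynkin property (\ref{eq:D(u)})/(\ref{eq:l1}), with existence of $\Delta$ delegated to Proposition~3.5.2 of \cite{GS2008}. Your explicit verification that the $\otimes\, 1$ terms cancel via Lemma~\ref{l:1} and that $\rho\otimes\id_V$ is invertible on the positive-degree part (making the recursion well posed) only spells out details the paper leaves implicit.
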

\begin{proof}
From (\ref{eq:Delta(v)}) one has that
\begin{equation*}
(\rho \otimes \id_V) \circ \bar \Delta = -(\rho(v) \otimes 1) + (\rho \otimes \id_V) \circ  \Delta
\end{equation*}
Application of  (\ref{eq:rhoD}), the coassociativity of $\Delta$, and (\ref{eq:l1}) lead to
\begin{align*}
(\rho \otimes \id_V) \circ  \Delta &=
(\mu_{\H} \otimes \id_V) \circ ((D \otimes \id_{\H}) \circ \Delta ) \otimes \id_V) \circ \Delta(v) \\
&=
(\mu_{\H} \otimes \id_V) \circ (D \otimes \id_{\H} \otimes \id_{\H}) \circ (  \Delta  \otimes \id_V) \circ \Delta(v)\\
&=
(\mu_{\H} \otimes \id_V) \circ (D \otimes \id_{\H} \otimes \id_{\H}) \circ (  \id_V  \otimes \Delta) \circ \Delta(v) \\
&=
(\mu_{\H} \otimes \id_V) \circ (  \id_V  \otimes \Delta) \circ
 (D \otimes \id_{\H})  \circ \Delta(v)\\
&=
(\mu_{\H} \otimes \id_V) \circ (  \id_V  \otimes \Delta) \circ
 (D(v) \otimes 1 + (D \otimes \mathrm{id}_{V}) \circ \hat \delta(v))\\
&=
(\mu_{\H} \otimes \id_V) \circ (  D(v)  \otimes 1 \otimes 1)  +
(\mu_{\H} \otimes \id_V) \circ
  (D \otimes \Delta) \circ \hat \delta(v)\\
  &=
 (  D(v)   \otimes 1)  +
(\mu_{\H} \otimes \id_V) \circ
  (D \otimes \Delta) \circ \hat \delta(v).
 \end{align*}
 \qed
\end{proof}

Given a graded Lie coalgebra   $(V,\delta)$, consider the graded linear map  $\hat \delta:V \to V \otimes V$ determined in terms of $\delta$ by
\begin{equation}
\label{eq:hdelta}
\rho \circ \hat \delta = (\id_V \otimes \rho) \circ \delta,
\end{equation}
 that is,
\[
\hat \delta(v_i) =
\sum_{i',i'' \in \cI} \frac{|i''|}{|i|} \lambda^i_{i',i''} \, v_{i'} \otimes v_{i''},  \mbox{ for }  i \in \cI.
\]
Clearly, (\ref{eq:deltahdelta}) holds, and it is not difficult to check that $(V,\hat \delta)$ is a pre-Lie coalgebra, or equivalently, that the binary operation $\rhd:V^* \otimes V^* \to V^*$ obtained by dualizing the coproduct $\hat \delta$ endows $V^*$ with a structure of graded pre-Lie algebra. Indeed, for each $\beta',\beta'' \in V^*$,
\[
\beta' \rhd \beta'' =\partial^{-1} [\partial(\beta'),\beta''], \quad
\beta',\beta'' \in \tg,
\]
where $\partial^{-1}$ denotes the inverse of the restriction to $V^*$ of $\partial$, so that $[\beta',  \beta''] = \beta' \rhd \beta'' - \beta'' \rhd \beta'$.

\begin{theorem}
 \label{th:deltaDelta}
 Let $(V,\delta)$ be a graded pre-Lie coalgebra.   The symmetric algebra $\H=S(V)$ becomes a
 graded connected Hopf algebra with the coproduct $\Delta$ determined as the unique graded algebra map $\Delta:\H \to \H \otimes \H$ such that (\ref{eq:Delta(v)}) holds and  the linear map $\bar \Delta: V \to \H \otimes V$ is  uniquely determined by the relation
\[
(\rho \otimes \id_V) \circ \bar \Delta = ( \mu \otimes \id_V )  \circ ( \rho \otimes \Delta) \circ \hat \delta,
\]
where $\hat \delta:V \to V \otimes V$ is determined by (\ref{eq:hdelta}).
\end{theorem}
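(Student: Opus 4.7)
The plan is to deduce Theorem~\ref{th:deltaDelta} from Theorem~\ref{th:hdeltaDelta} applied to the pre-Lie coalgebra $(V,\hat\delta)$ constructed from the given Lie coalgebra $(V,\delta)$ through (\ref{eq:hdelta}). That theorem furnishes a unique graded connected commutative Hopf algebra structure on $\H=S(V)$ with coproduct $\Delta$ whose reduced part $\bar\Delta$ satisfies
\[
(\rho\otimes\id_V)\circ\bar\Delta = (D-\rho)\otimes\one + (\mu_\H\otimes\id_V)\circ(D\otimes\Delta)\circ\hat\delta,
\]
where $D:S(V)\to V$ is the Dynkin operator fixed by (\ref{eq:D(u)})--(\ref{eq:D(v)}). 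The task is then to show that, under the specific link (\ref{eq:hdelta}) between $\hat\delta$ and $\delta$, this expression collapses to the cleaner formula stated in Theorem~\ref{th:deltaDelta}.

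The main step, and the principal obstacle, is the auxiliary identity $D(v)=\rho(v)$ for every $v\in V$. The argument proceeds by induction on the homogeneous degree $n$ of $v$. The base case $n=1$ is immediate: $\hat\delta(v)=0$, so Lemma~\ref{l:1} gives $D(v)=\rho(v)$. For $n\ge 2$, write $\hat\delta(v)=\sum_i v'_i\otimes v''_i$, where each factor has degree strictly less than $n$; the inductive hypothesis then yields $\mu_\H\circ(D\otimes\id_V)\circ\hat\delta(v)=\mu_\H\circ(\rho\otimes\id_V)\circ\hat\delta(v)$. Combining (\ref{eq:hdelta}) with $\delta=\hat\delta-\tau\circ\hat\delta$, and decomposing $\rho$ on $V\otimes V$ as $\rho\otimes\id_V+\id_V\otimes\rho$, one obtains the degree-weighted antisymmetry
\[
(\rho\otimes\id_V)\circ\hat\delta=-\tau\circ(\rho\otimes\id_V)\circ\hat\delta.
\]
Since $\H=S(V)$ is commutative one has $\mu_\H\circ\tau=\mu_\H$, so applying $\mu_\H$ to both sides forces $\mu_\H\circ(\rho\otimes\id_V)\circ\hat\delta(v)=0$. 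Lemma~\ref{l:1} now delivers $D(v)=\rho(v)$, closing the induction.

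With $D|_V=\rho|_V$ in hand, the rest of the argument is direct substitution. The $(D-\rho)\otimes\one$ summand in the formula of Theorem~\ref{th:hdeltaDelta} vanishes pointwise on $V$, while $(D\otimes\Delta)\circ\hat\delta$ coincides with $(\rho\otimes\Delta)\circ\hat\delta$ because the first tensor factor of $\hat\delta(v)$ always lies in $V$. What remains is precisely the identity of Theorem~\ref{th:deltaDelta}. Existence and uniqueness of $\Delta$ as a graded algebra map satisfying this identity are inherited from Theorem~\ref{th:hdeltaDelta}: the operator $\rho\otimes\id_V$ acts as multiplication by $k\ge 1$ on each homogeneous piece $\H_k\otimes V_{n-k}$ of $\bar\Delta(V_n)$, and so the formula recovers $\bar\Delta$ degree by degree from lower-degree values of $\Delta$. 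All the work is concentrated in the inductive identity $D|_V=\rho|_V$, which is particular to the symmetrization (\ref{eq:hdelta}) and would fail for a generic pre-Lie coalgebra fed into Theorem~\ref{th:hdeltaDelta}.
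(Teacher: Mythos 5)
Your proof is correct and follows essentially the same route as the paper: both reduce Theorem~\ref{th:deltaDelta} to Theorem~\ref{th:hdeltaDelta} by showing that $D|_V=\rho|_V$, which amounts to the vanishing of $\mu_{\H}\circ(\rho\otimes\id_V)\circ\hat\delta$ forced by the antisymmetry of the Lie cobracket together with the commutativity of $S(V)$. The paper merely states this key fact in one line, whereas you spell out the degree-weighted antisymmetry $(\rho\otimes\id_V)\circ\hat\delta=-\tau\circ(\rho\otimes\id_V)\circ\hat\delta$ and the induction explicitly; the substance is identical.
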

\begin{proof}
The definition  (\ref{eq:hdelta}) of $\hat \delta$ and the assumption of $(V,\delta)$ being a Lie coalgebra implies that the identity (\ref{eq:D(v)}) holds with $D(v):=\rho(v)$. The result then follows from Theorem~\ref{th:hdeltaDelta}.
\qed
\end{proof}

Finally, we provide the dual basis  (\ref{eq:Zbasis}) (indexed by the set (\ref{eq:J})) of the basis of monomials $v_{i_1} \cdots v_{i_m}$ of $S(V)$ corresponding to the graded connected commutive Hopf algebra structure on $S(V)$ determined in Theorem~\ref{th:deltaDelta}.

 We first need some notation.
  \begin{itemize}
\item Given $i \in \cI$, we  write $|i|=n$ if $i \in \cI_n$. For  $j=(i_1,\ldots,i_m) \in \cJ$, we set $|j| = |i_1| + \cdots + |i_m|$. We also write $|e|=0$.
 \item  Given $j =(i_1,\ldots,i_m)\in \cJ$ and $i \in \cI$, we write $i \in j$ if $i \in \{i_1,\ldots,i_m\}$, and, in that case, we denote as $(j\backslash i)$ the element of $\cJ$ obtained by removing from $j=(i_1,\ldots,i_m)$ one occurrence of $i$. In particular, if $j=(i)$, then $(j\backslash i)=e$.
\end{itemize}
For $j=(i_1,\ldots,i_m) \in \cJ$, we set:
 \begin{equation}
\label{eq:Zj}
Z_{j} = \frac{|i|}{|j|} \sum_{i \in j}  G_{i} \star Z_{(j\backslash i)}.
\end{equation}
Observe that $Z_{(i)} = G_{i}$ for all $i \in \cI$.
\begin{theorem}
The set (\ref{eq:Zbasis}) of elements of $U(\tg)$ given by (\ref{eq:Zj}) is a basis of $U(\tg)$ dual to the basis of monomials $u_j = v_{i_1} \cdots v_{i_m}$ for $j=(i_1,\ldots,i_m) \in \cJ$ of the Hopf algebra determined in Theorem~\ref{th:deltaDelta}.
\end{theorem}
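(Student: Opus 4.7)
The plan is to show by induction on the degree $|j|$ that
\[
\langle Z_j, u_k \rangle = \delta_{j,k} \qquad \text{for all } j,k \in \cJ,
\]
from which the theorem follows automatically: each graded component $\H_n$ is the finite-dimensional dual of $U(\tg)_n$, so a family in $U(\tg)_n$ that pairs as a dual basis with the known basis $\{u_j : j \in \cJ_n\}$ of $\H_n$ is itself a basis. The base cases $|j| \leq 1$ are immediate, since $Z_e = \one$ pairs trivially with $u_e = \one$ (and with nothing else by graded duality), while $Z_{(i)} = G_i$ is dual to $v_i = u_{(i)}$ by the very construction of $V$ in Subsection~\ref{ss:Hopf}.

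For the inductive step with $|j| \geq 2$, I would combine two ingredients. First, the mutual duality of the product $\star$ in $U(\tg)$ and the coproduct $\Delta$ in $\H$ turns the recursion (\ref{eq:Zj}) into
\[
\langle Z_j, u_k \rangle \;=\; \frac{1}{|j|} \sum_{i \in j} |i| \, \langle G_i \otimes Z_{(j\setminus i)}, \Delta(u_k) \rangle.
\]
Because $G_i$ is dual to $v_i$, only the projection of $\Delta(u_k)$ onto $V \otimes \H$ contributes to these inner pairings. Writing this projection as $\sum_{l \in \cI} v_l \otimes \phi_l(u_k)$ with $\phi_l(u_k) \in \H$ uniquely determined, each inner pairing collapses to $\langle Z_{(j \setminus i)}, \phi_i(u_k) \rangle$, and by the inductive hypothesis this equals the coefficient $[u_{(j \setminus i)}]\, \phi_i(u_k)$ of the monomial $u_{(j\setminus i)}$ in the basis expansion of $\phi_i(u_k)$.

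Second, I would invoke the Dynkin operator $D$ of the Hopf algebra of Theorem~\ref{th:deltaDelta}. As argued in its proof, $D$ vanishes on $V^2 S(V)$ and satisfies $D(v) = \rho(v) = |v|\, v$ for every homogeneous $v \in V$; hence $(D \otimes \id_\H) \circ \Delta(u_k) = \sum_{l \in \cI} |l|\, v_l \otimes \phi_l(u_k)$, and applying $\mu_\H$ together with (\ref{eq:rhoD}) yields the master identity
\[
\sum_{l \in \cI} |l| \, v_l\, \phi_l(u_k) \;=\; \rho(u_k) \;=\; |k|\, u_k \quad \text{in } \H.
\]
Since $v_l \cdot u_{(j \setminus l)} = u_j$ precisely when $l \in j$, reading off the coefficient of $u_j$ on both sides gives $\sum_{i \in j} |i|\, [u_{(j \setminus i)}]\, \phi_i(u_k) = |k|\, \delta_{j,k}$. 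Feeding this back into the first step produces $\langle Z_j, u_k \rangle = (|k|/|j|)\, \delta_{j,k} = \delta_{j,k}$, closing the induction.

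The delicate point is coordinating three distinct dualities simultaneously (between $\star$ and $\Delta$, between $\{G_i\}$ and $\{v_i\}$, and between $U(\tg)$ and $\H$) and correctly handling multiplicities when an index $j \in \cJ$ has repeated entries. In particular, the normalization $|i|/|j|$ in (\ref{eq:Zj}) forces the interpretation of $\sum_{i \in j}$ as a sum over the distinct $i \in \cI$ appearing in $j$ rather than over positions in the tuple, which is exactly the convention under which the coefficient extraction from the master identity matches the recursive formula for $Z_j$.
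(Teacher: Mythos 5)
Your proof is correct, and all the ingredients you use ($\langle G_i, v_l\rangle=\delta_{il}$ with $G_i$ vanishing on $\R\one\oplus V^2S(V)$ because it is an infinitesimal character, the duality between $\star$ and $\Delta$, the vanishing property (\ref{eq:D(u)}), $D|_V=\rho|_V$ for the Hopf algebra of Theorem~\ref{th:deltaDelta}, and the convolution identity (\ref{eq:rhoD})) are available in the paper; your closing remark about reading $\sum_{i\in j}$ as a sum over \emph{distinct} indices occurring in $j$ matches the paper's stated convention for the symbol $i\in j$. The route, however, is packaged differently from the paper's. The paper evaluates the same underlying identity $\rho=D*\mathrm{id}$ against a generic character $\alpha\in\G$: it invokes $\langle\partial(\alpha)\star\alpha^{-1},u\rangle=\langle\alpha,D(u)\rangle$, uses $D|_V=\rho|_V$ to conclude that $\partial(\alpha)\star\alpha^{-1}=\sum_i|i|\langle\alpha,v_i\rangle G_i$, rewrites this as $\partial\bigl(\sum_j\langle\alpha,u_j\rangle Z_j\bigr)=\bigl(\sum_i|i|\langle\alpha,v_i\rangle G_i\bigr)\star\bigl(\sum_j\langle\alpha,u_j\rangle Z_j\bigr)$, and obtains (\ref{eq:Zj}) by expanding and equating the coefficients of the $\langle\alpha,u_j\rangle$ (using the character property $\langle\alpha,v_i\rangle\langle\alpha,u_{j\backslash i}\rangle=\langle\alpha,u_j\rangle$). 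That generating-function argument is shorter but leaves implicit both the linear independence of the monomial coordinate functions on $\G$ needed to equate coefficients and the observation that the recursion plus $Z_e=\one$, $Z_{(i)}=G_i$ determines the dual basis uniquely. Your coefficientwise induction proves the biorthogonality $\langle Z_j,u_k\rangle=\delta_{j,k}$ directly and makes the "expanding and equating terms" step fully explicit, at the cost of juggling the three dualities you mention; the two proofs are two dualizations of the same Dynkin-operator identity.
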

\begin{proof}
For each character $\alpha \in \G$ of $\H$, it holds~\cite{MKL2011}
\[
\langle \partial(\alpha)\star \alpha^{-1}, u \rangle =
\langle \alpha, D(u) \rangle \quad \mbox{for all} \quad u \in \H.
\]
Since $D(v)=\rho(v)$ for all $v \in V$,
\[
\langle \partial(\alpha)\star \alpha^{-1}, v \rangle =
\langle \alpha, \rho(v) \rangle.
\]
The later is equivalent to
\[
\partial\left(\sum_{j \in \cJ} \langle \alpha, u_j \rangle \, Z_{j}\right) =
\partial\left(\sum_{i \in \cI} \langle \alpha, v_i \rangle \, G_{i} \right) \star \left(\sum_{j \in \cJ} \langle \alpha, u_j \rangle \, Z_{j}\right).
\]
One finally arrives to (\ref{eq:Zj})
by expanding the right-hand side of that identity and equating terms.
\qed
\end{proof}

\section{Perturbed problems}

As we saw in Section~\ref{sec:an example}, the use of the shuffle Hopf algebra to average oscillatory problems in Euclidean space leads to word series expansions. {\em Extended word series}, introduced in \cite{words}, are a generalization of word series which appear in a natural way when solving some problems by means of the techniques we are studying. These include the reduction to normal form of continuous or discrete dynamical systems \cite{words}, \cite{juanluis}, the analysis of splitting algorithms of perturbed integrable problems \cite{words}, the computation of formal invariants of perturbed Hamiltonian problems \cite{juanluis} and averaging of perturbed problems \cite{kurusch}. We now study the extension of these techniques to scenarios where the shuffle Hopf algebra is replaced  by other Hopf algebras.

 We consider the situation where in the initial value problem  (\ref{eq:odeX}),  $F(t)$ is a perturbation $F(t) = F^0 + \tilde F(t)$ of a derivation $F^0 \in \derCC$ with a well defined exponential curve $\exp(t \, F^0)$ in $\autCC$.
If the solution $X(t)$ of the given problem
\begin{equation}
\label{eq:odeXp}
\frac{d}{dt} X(t) = X(t)  ( F_0 + \tilde F(t)), \quad X(0)=I,
\end{equation}
exists, then it may be written as $X(t) = Y(t) \exp(t \, F_0)$, where the curve $Y:\R \to \autCC$ is the solution of the initial value problem
\begin{equation}
\label{eq:odeY}
\frac{d}{dt} Y(t) = Y(t) \exp(t \, F_0) \tilde F(t) \exp(-t \, F_0), \quad Y(0)=I.
\end{equation}
\subsection{Algebraic framework for perturbed problems}
\label{sec:productgroup}

We  work  with a graded Lie algebra
\begin{equation}
\label{eq:gradedLA0}
\bigoplus_{n\geq 0} \g_n
\end{equation}
with finite-dimensional homogeneous subspaces $\g_n$, and a Lie group $\G_0$ with Lie algebra $\g_0$ such that the exponential map $\exp:\g_0 \to \G_0$ is bijective;  we observe that $\g_0$  and
\[
 \tg = \bigoplus_{n\geq 1} \g_n,
\]
are respectively a Lie subalgebra and a Lie ideal of (\ref{eq:gradedLA0}) and denote by \( \G_0\) a Lie group with Lie algebra \(\g_0\).

Under such assumptions, one can prove that there exists an action $\cdot$ of the group $\G_0$  on the Lie algebra $\tg$ that is homogeneous of degree 0 (i.e.\ its restriction to each $\g_n$ is an action on $\g_n$), and  such that, for arbitrary $\tilde \beta \in \bigoplus_{n\geq 1} \g_n$ and $\beta_0 \in \g_0$, $\alpha_0(t)=\exp(t\, \beta_0)$,
\begin{equation}
\label{eq:actionDE}
\frac{d}{dt} \left( \alpha_0(t)\cdot \tilde \beta \right) = [\beta_0, \alpha_0(t)\cdot \tilde \beta].
\end{equation}

We  consider the commutative graded connected Hopf algebra $\H=\bigoplus_{n\geq 0} \H_n$ associated with the graded Lie algebra $\tg$, its group of characters $\G \subset \H^*$, and its Lie algebra of infinitesimal characters $\g \subset \H^*$.

 For  $\beta \in \g_0$, $\ad_{\beta}=[\beta,\cdot]$ is a derivation of (homogeneous degree 0 of) the graded Lie algebra (\ref{eq:gradedLA0}). Its restriction to $\tg$ is also a derivation of the Lie subalgebra $\tg$. This derivation can be extended to a derivation of the Lie algebra $\g$ of infinitesimal characters of $\H$. Hence, one can construct the semidirect sum Lie algebra
\[
\bar \g := \g \oplus_{S} \g_0 \supset \bigoplus_{n\geq 0} \g_n.
\]
More specifically, given $\bar \beta = \beta_0 +   \beta \in \bar \g$ and  $\bar\beta' = \beta'_0 +   \beta' \in \bar \g$ (where $\beta_0,\beta_0' \in \g_0$ and $  \beta,   \beta' \in \tg$), then
\[
[\bar \beta, \bar \beta'] = [\beta_0, \beta_0'] + (\ad_{\beta_0}   \beta'  - \ad_{\beta'_0}   \beta + [  \beta,   \beta']),
\]
\

The action of $\G_0$ on $\tg$ can be extended to an action of $\G_0$ on $U(\tg)$ and from that to an action on $\H^*$.  In particular, this defines an action of $\G_0$ on $\G$, which allows us to consider the semidirect product group
\[
\bar \G:=\G \ltimes \G_0.
\]
 More specifically, let  $(\alpha, \alpha_0), (\alpha', \alpha'_0) \in \bar \G$ (where $\alpha_0,\alpha'_0 \in \G_0$ and $  \alpha,   \alpha' \in \G$), then the product law $\circ$ in $\bar G$ is defined in terms of the action $\cdot$ and the product laws $\circ$ and $\star$ of $\G_0$ and $\G$ respectively as
\[
( \alpha, \alpha_0) \circ (\alpha',\alpha'_0) = (\alpha \star (\alpha_0 \cdot \alpha), \alpha_0 \circ \alpha'_0).
\]
We identify $(\one,\G_0)$ with $\G_0$, and $(\G,\id_0)$ with $\G$ (here $\id_0$ denotes the neutral element in the Lie group $\G_0$); then we write the elements $(\alpha,\alpha_0) \in \bar \G$ as $\alpha \circ \alpha_0$. In particular,  $\alpha_0 \cdot \alpha = \alpha_0 \circ \alpha \circ \alpha_0^{-1}$.
We denote as $\id$ the identity element in $\bar \G$.

Given a smooth curve $\bar \alpha:\R \to \bar \G$ such that $\bar \alpha(0)=\id$, its derivative at $t=0$ is
\[
\left. \frac{d}{dt} \bar \alpha(t)\right|_{t=0} :=
\left. \frac{d}{dt}  \alpha(t)\right|_{t=0} + \left. \frac{d}{dt}  \alpha_0(t)\right|_{t=0},
\]
with $\bar \alpha(t) = \alpha(t) \circ \alpha_0(t)$, where for all $t\in \R$,
$\alpha(t) \in \G$, $\alpha_0(t) \in \G_0$. This can be used to define
the adjoint representation $\mathrm{Ad}:\bar \G \to \mathrm{Aut}(\bar \g)$. In particular, for $\alpha_0 \in \G_0$, $\beta \in \g$,
$\mathrm{Ad}_{\alpha_0} \beta = \alpha_0 \cdot \beta$.

The exponential map $\exp:\bar \g \to \bar \G$ is defined as follows: given $\bar\beta = \beta_0 + \beta \in \bar \g$,  then $\exp(\beta_0 + \beta) := \alpha(1) \circ \exp(\beta_0)$, where
$\alpha(t) \in \G$ is the solution of (\ref{eq:abstract2}) with $\beta(t)$ replaced by $\exp(t \beta_0) \cdot \beta$. With this definition,
$\{\exp(t\, (\beta_0 + \beta)) \ : \ t \in \R\}$ is a one-parameter subgroup of $\bar \G$, and
\(
\left. (d/dt) \exp(t\, (\beta_0 + \beta)) \right|_{t=0} =\beta_0 + \beta.
\)

In general $\exp:\bar \g \to \bar \G$ is not surjective \cite{juanluis}.  Given $\bar \alpha = \alpha \circ \exp(\beta_0) \in \bar G$, there exists $\beta \in \g$ such that $\bar \alpha = \exp(\beta_0 + \beta)$ if, for each $n \geq 1$, the restriction to $\g_n$ of $\int_0^1 \Ad_{ \exp(t \beta_0)} dt$ is invertible. (The importance of this hypothesis will be illustrated in Subsection~\ref{ss:splitting} below.)

\subsection{Back to perturbed differential equations}

We now consider a perturbed operator differential equation (\ref{eq:odeXp}),
and assume that there exist a Lie algebra homomorphism
\[
\Psi:\bigoplus_{n\geq 0} \g_n \to \derCC,
\]
an element  $\beta_0 \in \g_0$ and a curve $\tilde \beta(t)$ in $ \tg$
with $\Psi(\beta_0)=F_0$ and $\Psi(\tilde \beta(t)) = \tilde F(t)$.
This together with (\ref{eq:actionDE}) implies that
\[
\Psi(\exp(t \, \beta_0) \cdot  \tilde \beta(t)) = \exp(F^0) \tilde F(t) \exp(-F^0).
\]
Equation (\ref{eq:odeY}) now reads
\begin{equation}
\label{eq:odeY2}
\frac{d}{dt} Y(t) = Y(t)  \Psi(\alpha_0(t) \cdot \tilde \beta(t)), \quad Y(0)=I,
\end{equation}
where $\alpha_0(t) = \exp(t \beta_0)$.
 The problem (\ref{eq:odeY2}) can be formally solved with the techniques in the preceding section as
 \[
Y(t)= \sum_{j \in \cJ} \langle \alpha(t), u_j \rangle\, \Psi(Z_j),
\]
where $\alpha:\R \to \G$ is the solution of
\[
\frac{d}{dt} \alpha(t) = \alpha(t) \star (\alpha_0(t) \cdot \tilde \beta(t)), \quad \alpha(0)=\one.
\]
 Hence, a formal solution $X(t)$ of (\ref{eq:odeXp}) is given by
\[
X(t) = \left( \sum_{j \in \cJ} \langle \alpha(t), u_j \rangle\, \Psi(Z_j)  \right)\exp(t \, \Psi(\beta_0)).
\]

As expected, the map that sends each $\bar \alpha =\alpha \circ \exp(\beta_0) \in \bar \G$ to the formal automorphism
\[
\left( \sum_{j \in \cJ} \langle \alpha, u_j \rangle\, \Psi(Z_j)  \right)\exp(t \, \Psi(\beta_0))
\]
behaves as a group homomorphism. Similarly, the map that sends each $\bar \beta = \beta_0 + \beta \in \bar \g$ to the formal derivation
\(
\Psi(\beta_0) +
 \sum_{j \in \cJ} \langle \beta, u_j \rangle\, \Psi(Z_j)
\)
behaves as a Lie algebra homomorphism.
In addition, if $\exp(\beta_0 + \beta) = \alpha \circ \exp(\beta_0)$, then
\[
\exp\left(
\Psi(\beta_0) +
 \sum_{j \in \cJ} \langle \beta, u_j \rangle\, \Psi(Z_j)
\right) =
\left( \sum_{j \in \cJ} \langle \alpha, u_j \rangle\, \Psi(Z_j)  \right)\exp(t \, \Psi(\beta_0).
\]
The adjoint representation $\mathrm{Ad}:\bar \G \to \mathrm{Aut}(\bar \g)$ also translates as expected through the map $\Psi$, so that it can be used to apply changes of variables in operator differential equations of the form (\ref{eq:odeXp}).

\subsection{Application: modified equations for splitting methods}
\label{ss:splitting}

 The material just presented may be applied to analyze numerical integrators of differential equations. We refer to \cite{words} for a detailed study of the application of splitting integrators to the solution of perturbations of integrable problem; that study is based on the use of the shuffle Hopf algebra/extended word series.
 Here we show how to proceed when the word series scenario is replaced by the more general framework developed in this section. For simplicity the attention is restricted to the well-known Strang splitting formula.

 Assume that $\tilde F(t)$ is independent of $t$, and that $\exp(t \,\tilde F)$ exists. Then, it is well known that the solution operator $X(t) = \exp(t \, (F_0 +\tilde F))$ can be approximated at $t=\tau, 2\tau,3\tau,\ldots$($\tau$ is the time step)
by $X(k\, \tau) \approx X_k \in \autCC$, where $X_0=I$ and
\begin{equation}
\label{eq:Strang}
X_k = X_{k-1} \exp(\frac{\tau}{2}\, F_0)   \exp(\tau\, \tilde F)  \exp(\frac{\tau}{2}\, F_0), \quad k=1,2,3,\ldots
\end{equation}
If $F_0=\Psi(\beta_0 )$, $\tilde F = \Psi(\tilde \beta)$ with $\beta_0 \in \g_0$ and $\tilde \beta \in \tg$, then
\[
X_{k} = X_{k-1} \left(\sum_{j \in \cJ} \langle\alpha^{\tau}, u_j \rangle\, \Psi(Z_j),
\right),
\]
where
\begin{eqnarray*}
\alpha^{\tau} &=& \textstyle  \exp(\frac{\tau}{2}\, \beta_0) \circ \exp(\tau\, \tilde \beta) \circ \exp(\frac{\tau}{2}\, \beta_0) = \exp(\tau\, \hat \beta^{\tau} ) \circ \exp(\tau\, \beta_0),\\
\hat \beta^{\tau}  & =& \textstyle  \exp(\frac{\tau}{2}\, \beta_0) \cdot \tilde \beta.
\end{eqnarray*}

Let us assume that, for each $n \geq 1$, the restriction to $\g_n$ of
$\int_0^{\tau} \mathrm{Ad}_{\exp(t\, \beta_0)} dt$ is invertible.\footnote{
In some cases, this assumption holds for most values of $\tau \in \R$, but fails for some particular values which gives ris to so-called numerical resonances \cite{words}.}
 In that case,  there exists $\beta^{\tau} \in \g$ such that $\alpha^{\tau} = \exp(\tau \, (\beta_0 + \beta^{\tau}))$, which back to operators, implies that $X_k \in \autCC$ formally coincides with $X^{\tau}(k \tau)$, where $X^{\tau}(t)$ is the formal solution with $X^{\tau}(0)=I$ of the {\em modified equation}
\[
\frac{d}{dt} X^{\tau}(t) = X^{\tau}(t) \left( F_0 + \sum_{i \in \cI} \langle\beta^{\tau}, v_i \rangle\, \Psi(G_i) \right).
\]
Modified equations are of course a powerful tool to analyze the performance of numerical integrators, see e.g.\ \cite{ssc}.

\subsection{Hopf algebraic framework}

To conclude the paper we shall  briefly show how to cast the product group and product Lie algebra constructed in Section
\ref{sec:productgroup} as the group of characters and Lie algebra of infinitesimal characters of  a suitable Hopf algebra.
As in Section \ref{ss:Delta} we consider a (graded) subspace $V$  of
the commutative graded connected Hopf algebra $\H=\bigoplus_{n \geq 0} \H_n$ associated with the graded Lie algebra $\tg =\bigoplus_{n\geq 1} \g_n$. Recall that $V$ has a Lie coalgebra structure such that the Lie algebra $\g$ of infinitesimal characters of $\H$ is isomorphic to the Lie algebra $V^*$ dual to the Lie coalgebra $V$.

In addition to the hypotheses in Section~\ref{sec:productgroup}, we assume that:
\begin{itemize}
\item $\G_0$ is an affine algebraic group with Lie algebra $\g_0$. That is, $\G_0$ (respectively $\g_0$) is the group of  characters (respectively Lie algebra of infinitesimal characters) of a finitely generated commutative Hopf algebra $\bar \H_0$.
\item The action $\cdot$ of the group $\G_0$ on $\g$ can be obtained by dualizing a comodule map $\hat \Delta_0:V \to \bar \H_0 \otimes V$. That is, given $\alpha_0 \in \G_0$,  $\beta \in \g$,
\[
\langle \alpha_0 \cdot \beta, u \rangle = \langle \alpha_0 \otimes \beta, \hat \Delta_0(u) \rangle,
\]
for each $u\in\H$.
\end{itemize}

Then, $\bar \H := \bar \H_0 \oplus \H_1 \oplus \H_2 \oplus \cdots$ can be endowed with a commutative graded Hopf algebra structure in such a way that the resulting group of characters is the semidirect product group $\bar \G$, and the resulting Lie algebra of infinitesimal characters is the semidirect sum Lie algebra
$\bar \g$.

\begin{acknowledgement}
A. Murua and J.M. Sanz-Serna have been supported by projects MTM2013-46553-C3-2-P and MTM2013-46553-C3-1-P from Ministerio de Econom\'{\i}a y Comercio, and MTM2016-77660-P(AEI/FEDER, UE)
from Ministerio de Eco\-nom\'{\i}a, Industria  y Competitividad, Spain.
 Additionally A. Murua has been partially supported by the Basque Government (Consolidated Research Group IT649-13).
\end{acknowledgement}
%
%
%
%

%
\end{document}